\documentclass[12pt]{amsart}

\usepackage[utf8]{inputenc}
\usepackage{amsfonts,amssymb}
\usepackage{amsmath}
\usepackage{graphicx}

\usepackage{comment}
\usepackage{color}
\usepackage{tikz}
\usetikzlibrary{topaths}
\usetikzlibrary{decorations.markings}
\tikzset{->-/.style={decoration={
  markings,
  mark=at position 0.5 with {\arrow{>}}},postaction={decorate}}}
\tikzset{-<-/.style={decoration={
  markings,
  mark=at position 0.5 with {\arrow{<}}},postaction={decorate}}}

\usepackage{a4wide}

\usepackage{empheq}
\numberwithin{equation}{section}

\usepackage{amsthm}
\theoremstyle{plain}
\newtheorem{theorem}{Theorem}[section]
\newtheorem{lemma}[theorem]{Lemma}
\newtheorem{corollary}[theorem]{Corollary}
\newtheorem{proposition}[theorem]{Proposition}

\newtheorem{cit}[theorem]{Citation}

\newtheorem*{lemma*}{Lemma}
\newtheorem*{corollary*}{Corollary}

\newtheorem*{main:comm_split}{Theorem~\ref{thrm:comm_split}}
\newtheorem*{main:comm_cliques}{Corollary~\ref{cor:comm_cliques}}
\newtheorem*{main:comm_nf_split}{Corollary~\ref{cor:comm_nf_split}}
\newtheorem*{main:braid}{Theorem~\ref{thrm:braid}}
\newtheorem*{main:loop}{Theorem~\ref{thrm:loop}}

\theoremstyle{definition}
\newtheorem{definition}[theorem]{Definition}

\newcommand{\R}{\mathbb{R}}

\newcommand{\Z}{\mathbb{Z}}

\newcommand{\defeq}{\mathrel{\mathop{:}}=}
\newcommand{\lk}{\operatorname{lk}}
\newcommand{\st}{\operatorname{st}}
\newcommand{\Hom}{\operatorname{Hom}}
\newcommand{\Aut}{\operatorname{Aut}}
\newcommand{\Out}{\operatorname{Out}}
\newcommand{\Stab}{\operatorname{Stab}}
\newcommand{\Cay}{\operatorname{Cay}}

\numberwithin{equation}{section}

%
%

\setlength{\parskip}{5pt}
\begin{document}

\title[Commensurability and abelian splittings of RAAGs and (loop) braid groups]{Commensurability invariance for abelian splittings of right-angled Artin groups, braid groups and loop braid groups}

\date{\today}
\subjclass[2010]{Primary 20F65;   
                Secondary 57M07, 
                20F36} 

\keywords{Right-angled Artin group, braid group, loop braid group, BNSR-invariant, abstract commensurability}

\author{Matthew C.~B.~Zaremsky}
\address{Department of Mathematics, Cornell University, Ithaca, NY 14853}
\email{zaremskym@gmail.com}

\begin{abstract}
 We prove that if a right-angled Artin group $A_\Gamma$ is abstractly commensurable to a group splitting non-trivially as an amalgam or HNN-extension over $\Z^n$, then $A_\Gamma$ must itself split non-trivially over $\Z^k$ for some $k\le n$. Consequently, if two right-angled Artin groups $A_\Gamma$ and $A_\Delta$ are commensurable and $\Gamma$ has no separating $k$-cliques for any $k\le n$ then neither does $\Delta$, so ``smallest size of separating clique'' is a commensurability invariant. We also discuss some implications for issues of quasi-isometry. Using similar methods we also prove that for $n\ge 4$ the braid group $B_n$ is not abstractly commensurable to any group that splits non-trivially over a ``free group-free'' subgroup, and the same holds for $n\ge 3$ for the loop braid group $LB_n$. Our approach makes heavy use of the Bieri--Neumann--Strebel invariant.
\end{abstract}

\maketitle
\thispagestyle{empty}


\section*{Introduction}

We say two groups are \emph{abstractly commensurable} or for brevity \emph{commensurable} if they contain isomorphic finite index subgroups. It has been an ongoing problem to understand commensurability for right angled Artin groups, or RAAGs for short. This can mean either to understand when a group is commensurable to given RAAG, or to understand when two RAAGs are commensurable to each other. For instance a RAAG is commensurable to a non-abelian free group if and only if it itself is a non-abelian free group, and on the other hand $\Z^n$ is not commensurable to any RAAG except itself. Related questions include all of the above replacing ``commensurable'' with ``quasi-isometric'' everywhere, and the ``rigidity'' question asking for which RAAGs does quasi-isometry imply commensurability.

Recall that for a finite simplicial graph $\Gamma$, the RAAG $A_\Gamma$ is defined by the presentation with a generator for each vertex of $\Gamma$ and the relations that two generators commute if and only if their corresponding vertices span an edge in $\Gamma$. A great deal of work has been done toward understanding the above questions for RAAGs $A_\Gamma$ assuming various restrictions on $\Gamma$. For example, in \cite{huang14} Huang proved that if $A_\Gamma$ has finite outer automorphism group, which is equivalent to saying that $\Gamma$ has no separating closed stars and no instances of $\lk v\subseteq \st w$ for vertices $v\ne w$, then a RAAG $A_\Delta$ is commensurable to $A_\Gamma$ if and only if it is quasi-isometric. Moreover, if $A_\Gamma$ and $A_\Delta$ both have finite outer automorphism group then they are quasi-isometric if and only if $\Gamma \cong \Delta$. Other examples of past work include \cite{huang16}, \cite{huang16a}, \cite{casals-ruiz16}, \cite{casals-ruiz16a}, \cite{behrstock10}, \cite{kim13} and \cite{kim14}. In all of these examples, results are shown assuming the RAAG or RAAGs in question have defining graphs falling into certain classes. For example, there are results if the graph is a tree, or a join, or an atomic graph, or a cyclic graph, or has some other such global structure.

In this paper we do not focus on any particular graph or class of graphs, but rather inspect the commensurability problem in terms of some more local features of the graph, with an eye on separating cliques. These correspond to non-trivial splittings over free abelian groups. Recall that a \emph{non-trivial splitting} of a group $G$ \emph{over} a subgroup $C$ is a decomposition $G=A*_C B$ with $G\ne A,B$ or $G=A*_C$ with $G\ne A$. Our main results are:

\begin{main:comm_split}
 Let $\Gamma$ be a finite simplicial non-clique graph with no separating $k$-cliques for any $k\le n$. Then $A_\Gamma$ is not commensurable to any group splitting non-trivially over $\Z^k$, for any $k\le n$.
\end{main:comm_split}

\begin{main:comm_cliques}
 If $A_\Gamma$ and $A_\Delta$ are commensurable and $\Gamma$ has no separating $k$-cliques for any $k\le n$, then neither does $\Delta$.
\end{main:comm_cliques}

An equivalent way to phrase Theorem~\ref{thrm:comm_split} is to say that such an $A_\Gamma$ does not virtually split non-trivially over $\Z^k$ for any $k\le n$. Another equivalent formulation is: if a RAAG virtually splits non-trivially over $\Z^k$ then it must actually split non-trivially over $\Z^\ell$ for some $\ell\le k$. Corollary~\ref{cor:comm_cliques} can be phrased informally as, ```smallest size of separating clique' is a commensurability invariant for RAAGs.''

Say that a group is \emph{NF} if it contains no non-abelian free subgroups (so, colloquially, it is a ``free group-free group''). It is a fact that RAAGs satisfy a strong Tits Alternative, namely every NF subgroup of a RAAG is abelian; even more strongly, every pair of elements in a RAAG either commute or generate a copy of $F_2$ \cite{baudisch81,carr14,kim15}. This leads to the following corollary in the case when $\Gamma$ has no separating cliques at all.

\begin{main:comm_nf_split}
 Let $\Gamma$ be a finite simplicial non-clique graph with no separating cliques. Then $A_\Gamma$ is not commensurable to any group splitting non-trivially over an NF subgroup.
\end{main:comm_nf_split}

The key to proving Theorem~\ref{thrm:comm_split} is understanding the Bieri--Neumann--Strebel (BNS) invariant well enough to produce non-trivial characters of the groups of interest that contain certain prescribed subgroups in their kernels while still lying in the BNS-invariant. The BNS-invariant of an arbitrary RAAG is known from work of Meier and VanWyk \cite{meier95}. There has been some other recent interest in using the BNS-invariants of RAAGs to distinguish groups, for instance Koban and Piggott determined precisely when the pure symmetric automorphism group of a RAAG is itself a RAAG \cite{koban14}, and Day and Wade used a new homology theory to produce similar results for the ``outer'' version \cite{day15}.

Using the BNS-invariant to approach questions of commensurability is a natural endeavor, but to the best of our knowledge has not been exploited in the literature. We expect that our techniques could be used in the future to get similar commensurability results for other groups whose BNS-invariants are known. In the interest of providing other explicit examples, we inspect braid groups and loop braid groups, and use similar methods to those used for RAAGs to get the following results.

\begin{main:braid}
 For $n\ge 4$ the braid group $B_n$ is not commensurable to any group that splits non-trivially over an NF subgroup.
\end{main:braid}

\begin{main:loop}
 For $n\ge 3$ the loop braid group $LB_n$ is not commensurable to any group that splits non-trivially over an NF subgroup.
\end{main:loop}

The BNS-invariant of the (loop) braid group is known but turns out not to be useful here, since it is too small (characters tend to become trivial as soon as they kill interesting subgroups). Instead we use the BNS-invariants of the pure braid group $PB_n$ and pure loop braid group $PLB_n$, which are known from work of Koban, McCammond and Meier \cite{koban15} and Orlandi-Korner \cite{orlandi-korner00}, and are robust enough to use for these purposes. Another relevant comment here is that Clay, Leininger and Margalit proved that for $n\ge 4$ the group $B_n$ is not commensurable to any RAAG \cite{clay14a}.

\medskip

This paper is organized as follows. In Section~\ref{sec:chars} we recall the BNS-invariant and establish some results about kernels of characters. In Section~\ref{sec:raags} we discuss RAAGs and their BNS-invariants, and refine a result of Groves and Hull \cite{groves15} about which RAAGs split over which abelian subgroups. In Section~\ref{sec:raag_results} we prove our main commensurability results, Theorem~\ref{thrm:comm_split} and Corollaries~\ref{cor:comm_cliques} and~\ref{cor:comm_nf_split}, about RAAGs, and in Section~\ref{sec:qi_raags} we discuss the consequences our results have for questions of quasi-isometry. Finally in Section~\ref{sec:braid} we prove related commensurability results, Theorems~\ref{thrm:braid} and~\ref{thrm:loop}, about braid groups and loop braid groups.

\subsection*{Acknowledgments} Thanks are due to Matt Brin, Matt Clay, Thomas Koberda, Ric Wade and Stefan Witzel for helpful discussions, useful comments, clarification of results and general encouragement. I am also grateful to Jingyin Huang for Lemma~\ref{lem:fin_out}.


\section{Characters of a group}\label{sec:chars}

A \emph{character} of a group $G$ is a homomorphism $G\to \R$. In this section we recall the definition of the BNS-invariant and establish some facts about the behavior of kernels of characters.

\subsection{The BNS-invariant}\label{sec:bns}

The BNS-invariant $\Sigma^1(G)$ of a finitely generated group $G$ is a certain subset of the \emph{character sphere}
$$S(G)\defeq \{[\chi]\mid 0\ne \chi\in \Hom(G,\R)\}$$
of $G$. Here $[\chi]$ is the equivalence class of the character $\chi\in \Hom(G,\R)$ under the equivalence relation given by: $\chi\sim \chi'$ whenever $\chi=a\chi'$ for some $a\in \R_{>0}$. The character sphere is thus the ``sphere at infinity'' for the euclidean vector space $\Hom(G,\R)$. The invariant $\Sigma^1(G)$ is the subset of $S(G)$ defined as follows.

\begin{definition}[BNS-invariant]\label{def:bns}
 Let $G$ be a finitely generated group and let $\Cay(G)$ be its Cayley graph with respect to some finite generating set. For $0\ne \chi\in\Hom(G,\R)$ let $\Cay(G)^{\chi\ge 0}$ be the induced subgraph of $\Cay(G)$ supported on those vertices $g$ with $\chi(g)\ge 0$. The \emph{BNS-invariant} $\Sigma^1(G)$ is defined to be
 $$\Sigma^1(G)\defeq \{[\chi]\in S(G)\mid \Cay(G)^{\chi\ge 0} \text{ is connected}\}\text{.}$$
 Denote by $\Sigma^1(G)^c$ the complement $S(G)\setminus \Sigma^1(G)$. For various reasons it will be convenient to adopt the convention that the trivial character $0$ lies in $\Sigma^1(G)^c$ (but note that this runs counter to the definition).
\end{definition}

In general the BNS-invariant can be very difficult to compute. It contains a huge amount of information, for example it reveals exactly which (normal) subgroups $N\le G$ containing $[G,G]$ are finitely generated or not, namely $N$ is finitely generated if and only if $[\chi]\in\Sigma^1(G)$ for all $0\ne \chi$ such that $\chi(N)=0$.

Even if $\Sigma^1(G)$ is completely known, it can still be very difficult to compute $\Sigma^1(H)$ for $H$ a finite index subgroup of $G$. There is a region of $S(H)$ that can be understood based just on knowing $\Sigma^1(G)$, namely the region given by characters of $H$ that are restrictions of characters of $G$:

\begin{cit}\cite[Proposition~B1.11]{strebel12}\label{cit:bns_fi}
 Let $G$ be a finitely generated group and $H$ a finite index subgroup. Let $\chi\in\Hom(G,\R)$ and consider the restriction $\chi|_H \in\Hom(H,\R)$ of $\chi$ to $H$. We have that $[\chi|_H]\in\Sigma^1(H)$ if and only if $[\chi]\in\Sigma^1(G)$.
\end{cit}

\subsection{Kernels of characters}\label{sec:kernels}

In this subsection we find a way to control which generators of a group must lie in the kernel of a character, given the knowledge that some prescribed subgroup lies in the kernel. The main result is Proposition~\ref{prop:kill_subgroup}.

Fix a finitely generated group $G$. Let $V$ denote the $\R$-vector space $(G/[G,G])\otimes\R$. Let $\phi \colon G \to V$ be the ``euclideanization'' map obtained by composing the abelianization map $G\to G/[G,G]$ with the map $G/[G,G]\to (G/[G,G])\otimes\R$.

\begin{definition}[Radical]\label{def:radical}
 Define the \emph{radical} $\sqrt{A}$ of $A\subseteq G$ to be the set $\{g\in G\mid g^q\in A$ for some $q\in\Z\setminus\{0\}\}$. Note that $A\subseteq \sqrt{A} \subseteq G$, and if $A$ is a subgroup of $G$ containing $[G,G]$ then $\sqrt{A}$ is a subgroup of $G$.
\end{definition}

For $J\le G$, if a character $\chi\in\Hom(G,\R)$ contains $J$ in its kernel then it necessarily contains $\sqrt{J[G,G]}$. This next proposition says, first, that $\chi$ does not necessarily contain more than this, and second, that under an addition restriction on $G$ (that will be satisfied by our future groups of interest), the number of generators of $J$ controls the number of generators of $G$ that can lie in $\ker(\chi)$.

\begin{proposition}[Kill $J$ and little else]\label{prop:kill_subgroup}
 Let $G$ be a finitely generated group, and let $J\le G$. Then there exists $\chi\in\Hom(G,\R)$ with $\ker(\chi)=\sqrt{J[G,G]}$. Moreover, if $G$ admits a finite generating set $S$ such that $\dim_\R(V)=|S|$, and $J$ is generated by $k$ elements, then at most $k$ elements of $S$ lie in $\ker(\chi)$.
\end{proposition}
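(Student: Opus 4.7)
The plan is to factor characters through the finite-dimensional $\R$-vector space $V=(G/[G,G])\otimes\R$ and thereby reduce everything to linear algebra on the lattice $L\defeq\phi(G)\subseteq V$. Every character $\chi\colon G\to\R$ kills $[G,G]$ and then extends uniquely to $V$ (since $\R$ is a $\Q$-vector space), so $\Hom(G,\R)=V^*$ and the problem becomes: build $f\in V^*$ whose composition with $\phi$ has kernel $\sqrt{J[G,G]}$. Let $W\subseteq V$ denote the $\R$-span of $\phi(J)$.

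The main step is to identify $\phi^{-1}(W)=\sqrt{J[G,G]}$ and then produce an $f$ with $\ker(f)\cap L=W\cap L$. The inclusion $\sqrt{J[G,G]}\subseteq\phi^{-1}(W)$ is immediate, since $g^q\in J[G,G]$ with $q\ne 0$ forces $q\phi(g)\in\phi(J)\subseteq W$. For the converse, $\phi(J)$ is a finitely generated subgroup of the lattice $L$ that $\R$-spans $W$, so it has rank equal to $\dim W$ and hence finite index in $L\cap W$ (which is saturated in $L$ as an intersection with a subspace). Thus any $\phi(g)\in L\cap W$ has a nonzero multiple in $\phi(J)$; pulling back to $G$ gives $g^q j^{-1}\in\sqrt{[G,G]}=\ker(\phi)$ for some $j\in J$, and passing to the abelian quotient $G/[G,G]$ yields $g^{qr}\in J[G,G]$ for some $r\ne 0$. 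To construct $f$, I use that $L/(L\cap W)$ is free abelian of rank $\dim V-\dim W$: I define $f$ to vanish on $W$ and to send the lifts of a $\Z$-basis of this quotient to $\Q$-linearly independent real numbers, ensuring $f$ is nonzero on every nonzero element of $L/(L\cap W)$. Setting $\chi\defeq f\circ\phi$ then gives $\ker(\chi)=\phi^{-1}(\ker(f))=\phi^{-1}(W)=\sqrt{J[G,G]}$.

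For the moreover clause, the hypothesis $|S|=\dim_\R V$ combined with the fact that $S$ generates $G$ forces $\{\phi(s):s\in S\}$ to be a basis of $V$. If $J$ has $k$ generators, then $\phi(J)$ is generated as an abelian group by at most $k$ vectors, so $\dim W\le k$. At most $\dim W$ basis vectors of $V$ can lie inside the subspace $W$ (any such vectors would be linearly independent in $W$), so at most $k$ of the $\phi(s)$ lie in $W$, giving at most $k$ elements $s\in S$ with $s\in\phi^{-1}(W)=\ker(\chi)$. The one genuinely delicate point of the argument is the identification $\phi^{-1}(W)=\sqrt{J[G,G]}$, which requires translating the linear-algebraic relation $q\phi(g)\in\phi(J)$ back into the group-theoretic relation $g^r\in J[G,G]$ despite the fact that $g$ and elements of $J$ need not commute; this works because both sides of the claimed equality are detected entirely in the abelian quotient $G/[G,G]$.
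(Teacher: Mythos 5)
Your proof is correct and follows essentially the same route as the paper's: both reduce to the euclideanization $V$, use $\Q$-linearly independent real numbers to build the character (the paper phrases this as embedding the free abelian quotient $G/\sqrt{J[G,G]}$ into $\R$), and establish the ``moreover'' clause by the same dimension count on the span $W$ of $\phi(J)$. Your explicit verification that $\phi^{-1}(W)=\sqrt{J[G,G]}$ is a correct, slightly more hands-on substitute for the paper's direct observation that $G/\sqrt{J[G,G]}$ is a finitely generated torsion-free abelian group.
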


\begin{proof}
 The quotient $G/\sqrt{J[G,G]}$ is a finitely generated torsion-free abelian group (i.e., a free abelian group), hence can be embedded in $\R$. Composing this embedding with $G\to G/\sqrt{J[G,G]}$ yields a character $\chi\in\Hom(G,\R)$ with $\ker(\chi)=\sqrt{J[G,G]}$. Now suppose $G$ admits a finite generating set $S$ such that $\dim_\R(V)=|S|$, and $J$ is generated by $k$ elements $j_1,\dots,j_k$. We claim that the image of $\sqrt{J[G,G]}$ in $V$ spans a subspace $W$ of dimension at most $k$. It suffices to prove that every element of $\sqrt{J[G,G]}$ maps under $\phi$ to a vector of $V$ in the span of the $\phi(j_i)$. Let $g\in \sqrt{J[G,G]}$, say $g^q=jc$ for $q\ne 0$, $j\in J$ and $c\in[G,G]$. Then $\phi(g)=\frac{1}{q}\phi(g^q)=\frac{1}{q}\phi(jc)=\frac{1}{q}\phi(j)$, which indeed lies in the span of the $\phi(j_i)$. Now, since $\dim_\R(V)=|S|$ and $\phi(S)$ spans $V$, we must have that $\phi$ is injective on $S$ and $\phi(S)$ is also linearly independent. Hence, at most $k$ elements of $S$ can map into $W$, and hence at most $k$ elements of $S$ can lie in $\sqrt{J[G,G]}=\ker(\chi)$.
\end{proof}


\section{Right-angled Artin groups}\label{sec:raags}

A \emph{right-angled Artin group} or \emph{RAAG} is a group admitting a finite presentation in which each relator is a commutator of two generators. Given a finite simplicial graph $\Gamma$, with vertex set $V(\Gamma)$ and edge set $E(\Gamma)$, we get a RAAG denoted $A_\Gamma$ by taking a generator for each vertex and declaring that two vertices commute if and only if they share an edge. For example if $E(\Gamma)=\emptyset$ then $A_\Gamma \cong F_{|V(\Gamma)|}$, the free group on $|V(\Gamma)|$ generators, and if $\Gamma$ is a \emph{clique}, i.e., a graph where every pair of vertices spans an edge, then $A_\Gamma \cong \Z^{|V(\Gamma)|}$.

The BNS-invariants of RAAGs were fully computed by Meier and VanWyk \cite{meier95}. We recall the computation here.

\begin{definition}[Living/dead subgraph]
 Given a character $\chi\in\Hom(A_\Gamma,\R)$, define the \emph{$\chi$-living subgraph} $\Gamma^*_\chi$ to be the induced subgraph of $\Gamma$ supported on those vertices $v$ with $\chi(v)\ne 0$, and the \emph{$\chi$-dead subgraph} $\Gamma^\dagger_\chi$ to be the induced subgraph of $\Gamma$ supported on those vertices $v$ with $\chi(v)=0$.
\end{definition}

\begin{cit}[BNS of RAAG]\cite{meier95}\label{cit:bns_raag}
 $[\chi]\in\Sigma^1(A_\Gamma)$ if and only if the $\chi$-living subgraph $\Gamma^*_\chi$ is connected and dominating in $\Gamma$.
\end{cit}

Here a subgraph $\Delta$ of $\Gamma$ is called \emph{dominating (in $\Gamma$)} if every vertex of $\Gamma\setminus \Delta$ is adjacent to a vertex of $\Delta$.

\medskip

In \cite{groves15}, Groves and Hull proved that the only way a non-abelian RAAG can split non-trivially over an abelian subgroup is if its defining graph admits a (possibly empty) separating clique. Recall that a subgraph $\Delta$ of $\Gamma$ is called \emph{separating (for $\Gamma$)} if $\Gamma\setminus \Delta$ is disconnected.

We now inspect the details of Groves and Hull's proof of their Theorem~A to get the following refined result:

\begin{proposition}[Splittings and cliques]\label{prop:split_clique}
 Let $\Gamma$ be a finite simplicial graph that is not a clique. The minimal $n\ge 0$ such that $A_\Gamma$ splits non-trivially over $\Z^n$ equals the minimal $n\ge 0$ such that $\Gamma$ admits a separating $n$-clique, with $n$ taken to be $\infty$ whenever such splittings or cliques do not exist.
\end{proposition}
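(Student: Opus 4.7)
The proposition asserts the equality of two numerical invariants of $\Gamma$, so the natural plan is to prove two inequalities. Write $n_s$ for the minimal $n\ge 0$ such that $A_\Gamma$ splits non-trivially over $\Z^n$, and $n_c$ for the minimal $n\ge 0$ such that $\Gamma$ admits a separating $n$-clique, each taken to be $\infty$ if no such integer exists.

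The easy direction is $n_s\le n_c$. Suppose $K$ is a separating $n_c$-clique in $\Gamma$, and partition $V(\Gamma)\setminus V(K)=V_1\sqcup V_2$ into two non-empty sets with no edges between them. Letting $\Gamma_i$ denote the induced subgraph on $V_i\cup V(K)$, a direct check of the defining relations of a RAAG gives
$$A_\Gamma\cong A_{\Gamma_1}*_{A_K}A_{\Gamma_2}\text{.}$$
Since $\Gamma$ is not a clique, both $V_1$ and $V_2$ are non-empty, so neither factor equals $A_\Gamma$ and the splitting is non-trivial. Because $A_K\cong\Z^{n_c}$, this gives $n_s\le n_c$.

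For the reverse inequality $n_c\le n_s$, the plan is to revisit Groves and Hull's proof of their Theorem~A in \cite{groves15} and extract a quantitative bound on the size of the separating clique produced from a splitting. Their argument starts with a non-trivial splitting of $A_\Gamma$ over an abelian subgroup $C$, places $C$ inside a maximal abelian subgroup of $A_\Gamma$, which by the standard description of maximal abelian subgroups in RAAGs is conjugate to a clique subgroup $A_K$, and then shows that $K$ can be taken to be a separating clique. The refinement required is that the clique $K$ produced satisfies $|K|\le\mathrm{rank}(C)$: if one works with a minimal $K$ such that a conjugate of $C$ sits inside $A_K$, one expects $C$ to embed with finite index in $A_K$, forcing $|K|=\mathrm{rank}(C)$. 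Applied to an edge stabilizer $\Z^{n_s}$ of the assumed splitting, this yields a separating clique of size at most $n_s$, so $n_c\le n_s$.

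The main obstacle is precisely this quantitative bookkeeping in the second direction. Groves and Hull's original statement asserts merely the existence of \emph{some} separating clique, so one must retrace their proof step by step, replacing each occurrence of ``some abelian subgroup'' or ``some clique'' by its precise rank or size, and verify that nowhere does the argument inflate the clique above the rank of the edge stabilizer with which one started. If a step in their proof enlarges $C$ to a larger abelian subgroup before locating the clique, one must check that the enlargement stays within a clique subgroup of size no bigger than $\mathrm{rank}(C)$, or else modify the argument to avoid the enlargement.
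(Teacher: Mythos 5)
Your first inequality ($n_s\le n_c$) is fine and matches the paper's (unelaborated) observation that a separating $n$-clique yields the visual splitting $A_{\Gamma_1}*_{A_K}A_{\Gamma_2}$. The problem is the reverse inequality, which is the entire content of the proposition: you do not prove it, you only announce a plan (``revisit Groves and Hull's proof and extract a quantitative bound,'' ``one expects $C$ to embed with finite index in $A_K$'') and then explicitly defer the key step as ``the main obstacle.'' That step is exactly what has to be carried out, and the route you sketch for it does not exist as stated. The Groves--Hull argument does not proceed by conjugating the edge group $C$ into a maximal abelian subgroup and identifying that with a clique subgroup; indeed the claim that maximal abelian subgroups of RAAGs are conjugate to clique subgroups $A_K$ is false. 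For $\Gamma$ the path $a$--$b$--$c$, the subgroup $\langle ac\rangle\times\langle b\rangle\cong\Z^2$ is maximal abelian (it is the centralizer of $ac$) but is not conjugate into $\langle a,b\rangle$ or $\langle b,c\rangle$, as one sees already in the abelianization. So the quantitative bookkeeping you defer cannot be done along the lines you describe.

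What the paper actually does for this direction is run the Bass--Serre argument of \cite{groves15} on the action of $A_\Gamma$ on the tree $T$ coming from the splitting, tracking where the rank $n$ of the edge stabilizers enters. If some generator $v$ acts hyperbolically, then for each $u\in\lk_\Gamma v$ a suitable power $u^{n_u}v^{m_u}$ (with $n_u\ne 0$) fixes an edge $e$ of the axis of $v$, so all these elements lie in $\Stab_{A_\Gamma}(e)\cong\Z^n$; commutation of these powers forces $\lk_\Gamma v$ to be a clique (via a retraction onto $F_2$ for any non-adjacent pair), and the fact that the commuting powers sit inside $\Z^n$ bounds the clique by $n$ vertices, yielding a separating $k$-clique with $k\le n$ (namely $\lk_\Gamma v$, which separates $v$ from $\Gamma\setminus\st_\Gamma v$). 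If every generator acts elliptically, Groves and Hull's map $F\colon\Gamma\to T$ produces a midpoint $p$ of an edge whose preimage $F^{-1}(p)$ is a separating clique; since all of its vertices fix that edge and the edge stabilizer is $\Z^n$, the clique again has at most $n$ vertices. An argument at this level of detail is what is needed in place of the maximal-abelian-subgroup heuristic.
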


To clarify, by \emph{$n$-clique} we mean a clique with $n$ vertices, i.e., the $1$-skeleton of an $(n-1)$-simplex.

\begin{proof}[Proof of Proposition~\ref{prop:split_clique}]
 The $n=\infty$ case is immediate from \cite[Theorem~A]{groves15}, so assume $n<\infty$. Note that if $\Gamma$ has a separating $n$-clique then $A_\Gamma$ splits non-trivially over $\Z^n$, so the thing to prove is that if $A_\Gamma$ splits non-trivially over $\Z^n$ then $\Gamma$ admits a separating $k$-clique for some $k\le n$. The splitting gives us an action of $A_\Gamma$ on a tree $T$ with edge stabilizers isomorphic to $\Z^n$, no global fixed points, and no edge inversions, and we will inspect this action using the proof of Theorem~A in \cite{groves15} as an outline.

First suppose some $v\in V(\Gamma)$ acts hyperbolically on $T$. Let $e$ be any edge of the axis of $v$ in $T$, so $\Stab_{A_\Gamma}(e)\cong \Z^n$. Let $u$ be a vertex in $\lk_\Gamma v$, so $u$ stabilizes the axis of $v$ in $T$. Hence there exist $n_u,m_u\in\Z$ with $n_u\ne 0$ such that $u^{n_u} v^{m_u}$ fixes this axis pointwise, and in particular $u^{n_u} v^{m_u} \in \Stab_{A_\Gamma}(e)$. Since this holds for every $u\in \lk_\Gamma v$, and since $\Stab_{A_\Gamma}(e)$ is abelian, we conclude that $[u^{n_u},w^{n_w}]=1$ for any $u,w\in\lk_\Gamma v$, which implies that $\lk_\Gamma v$ is a clique (this conclusion is also in \cite{groves15}), and even more precisely since $\Stab_{A_\Gamma}(e)\cong \Z^n$ we conclude that $\lk_\Gamma v$ is a $k$-clique for some $k\le n$. Since $\lk_\Gamma v$ separates $v$ from $\Gamma\setminus \st_\Gamma v$ (and the latter is non-empty since $\Gamma$ is not a clique but $\st_\Gamma v$ is), we have our separating $k$-clique.

Now assume that every $v\in V(\Gamma)$ acts elliptically on $T$. Groves and Hull define a map $F \colon \Gamma \to T$ that in particular takes each $v\in V(\Gamma)$ to some point of $T$ that it fixes. There is a special point $p$, at the midpoint of an edge, that is the image under $F$ of every $v$ fixing it. Since the action does not invert edges, all these $v$ even fix the edge containing $p$. As Groves and Hull show, $F^{-1}(p)$ is a separating clique in $\Gamma$, but even more precisely it is a separating $k$-clique for some $k\le n$, since the edge stabilizer is isomorphic to $\Z^n$.
\end{proof}

As a remark, the reason to exclude the case when $\Gamma$ is a clique is that while cliques have no separating cliques, technically $\Z^n$ does split non-trivially over $\Z^{n-1}$, as the HNN-extension $\Z^n = \Z^{n-1}*_t$ where the stable element $t$ conjugates $\Z^{n-1}$ to itself via the identity map.

Another remark is that the $n=1$ case was previously proved by Clay \cite{clay14}, and Groves and Hull remarked in \cite[Remark~0.1]{groves15} that their approach could recover Clay's result.


\section{Commensurability results for RAAGs}\label{sec:raag_results}

In this section we prove our main results about RAAGs, Theorem~\ref{thrm:comm_split} and Corollaries~\ref{cor:comm_cliques} and~\ref{cor:comm_nf_split}. We first prove a proposition about general finitely generated groups that shows, outside a trivial case, that if a group $G$ is commensurable to a group $G'$ that splits over a subgroup $L$, then $G$ contains a copy of a finite index subgroup of $L$ that cannot be killed by any pair of opposite characters $\pm\chi$ in the BNS-invariant of $G$.

\begin{proposition}\label{prop:negative_statement}
 Let $L$ be a group and let $G$ be a finitely generated group that is not virtually of the form $K\rtimes \Z$ for any finite index subgroup $K$ of $L$. Suppose $G$ is commensurable to a group $G'$ that splits non-trivially over $L$. Then there exists $K\le G$, with $K$ isomorphic to a finite index subgroup of $L$, such that for any $\chi\in\Hom(G,\R)$, if $\chi(K)=0$ then at least one of $[\pm\chi]$ lies in $\Sigma^1(G)^c$.
\end{proposition}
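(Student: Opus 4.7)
The plan is to use commensurability to share a finite-index subgroup with $G'$, apply Bass--Serre theory to extract $K$, and then derive a contradiction via BNS-theoretic rigidity. First I construct $K$: commensurability provides a finite-index subgroup $H\le G$ together with an isomorphism $\psi\colon H\to H'$ onto a finite-index subgroup $H'\le G'$. Since $G'$ splits non-trivially over $L$, it acts on a Bass--Serre tree $T$ without global fixed point and with edge stabilizers conjugate to $L$. The restricted $H'$-action on $T$ also has no global fixed point---an $H'$-fixed vertex would have finite $G'$-orbit whose center would be $G'$-fixed---so $H'$ acts non-trivially on its minimal invariant subtree $T_{H'}$ and inherits a graph-of-groups decomposition whose edge stabilizers are finite-index subgroups of $G'$-conjugates of $L$. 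Pick an edge $e_0$ of $T_{H'}$, set $K'=\Stab_{H'}(e_0)$, and define $K=\psi^{-1}(K')\le G$; this $K$ is isomorphic to a finite-index subgroup of $L$.

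I argue the claimed property by contradiction. Suppose $\chi\in\Hom(G,\R)$ has $\chi(K)=0$ yet both $[\pm\chi]\in\Sigma^1(G)$. Applying Citation~\ref{cit:bns_fi} to $H\le G$ gives both $[\pm\chi|_H]\in\Sigma^1(H)$; transporting through $\psi$ yields $\chi'\in\Hom(H',\R)$ with $\chi'(K')=0$ and both $[\pm\chi']\in\Sigma^1(H')$. Since characters factor through the abelianization, $\chi'$ vanishes on every $H'$-conjugate of $K'$.

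The heart of the argument is to show that this setup forces $H'$ to be virtually $K'\rtimes\Z$, contradicting the hypothesis on $G$. Using openness of $\Sigma^1(H')$ together with density of rational (discrete) points in the subspace $\{\chi''\in\Hom(H',\R):\chi''(K')=0\}$, I first replace $\chi'$ by a nearby discrete character $\chi'_0\colon H'\to \Z$ still satisfying $\chi'_0(K')=0$ and with both $[\pm\chi'_0]\in\Sigma^1(H')$. For discrete $\chi'_0$, this BNS-pair condition is equivalent to $N:=\ker(\chi'_0)$ being finitely generated, giving $H'=N\rtimes\Z$. I then analyze the action of the normal subgroup $N$ on $T_{H'}$ via the standard dichotomy for finitely generated normal subgroups acting on trees (global fixed vertex, invariant line, or fixed end): the non-vertex cases should be ruled out by exploiting $\chi'_0(K')=0$ together with both $[\pm\chi'_0]\in\Sigma^1(H')$ to produce a disconnection of $\Cay(H')^{\chi'_0\ge0}$ or $\Cay(H')^{-\chi'_0\ge 0}$. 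In the remaining fixed-vertex case, normality of $N$ forces $N$ to fix both endpoints of some edge, so $N\le K'$; combined with $K'\le N$ this gives $N=K'$ and $H'=K'\rtimes\Z$, whence $G$ is virtually $K\rtimes\Z$, the desired contradiction. The main obstacle will be making precise this tree-rigidity step, in particular ruling out the fixed-end and invariant-line cases.
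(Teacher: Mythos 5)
Your construction of $K$ is exactly the paper's: pass to a common finite-index subgroup $H\cong H'\le G'$, restrict the Bass--Serre action, and take $K$ to be (the preimage of) an edge stabilizer, which is a finite-index subgroup of a conjugate of $L$. That part is fine, as is the transfer of the hypothesis $[\pm\chi]\in\Sigma^1(G)$ down to $[\pm\chi']\in\Sigma^1(H')$ via Citation~\ref{cit:bns_fi}, and the discretization step (openness of $\Sigma^1$ plus density of discrete characters in the rationally defined subsphere $\{\chi''(K')=0\}$ is legitimate).

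The problem is that the heart of the proposition --- why a character vanishing on an edge group of the splitting cannot have both $[\pm\chi']$ in $\Sigma^1(H')$ unless the splitting is a non-strict ascending HNN extension --- is precisely the step you leave unproved. You reduce to: $H'=N\rtimes\Z$ with $N=\ker(\chi'_0)$ finitely generated and $K'\le N$, and then say the fixed-end and invariant-line/hyperbolic cases for the $N$-action on the tree ``should be ruled out,'' acknowledging this as the main obstacle. These cases are not vacuous and cannot be dismissed: the fixed-end case is exactly what happens when the graph of groups is a strictly ascending HNN extension $K'\ast_t$ (there the correct conclusion is only that \emph{one} of $[\pm\chi']$ lies in $\Sigma^1(H')^c$, which is why the proposition is stated with ``at least one''), and the case where $N$ contains a hyperbolic element is where the real work lies --- one must show that a finitely generated normal subgroup containing an edge group and with infinite cyclic quotient is incompatible with a non-ascending splitting. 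This is a genuine theorem (essentially Bieri--Strebel), and the paper does not reprove it: it invokes \cite[Proposition~2.5]{cashen16} for the non-ascending case and \cite[Proposition~2.1]{bieri10} for the strictly ascending HNN case, with the non-strict ascending case excluded by the hypothesis on $G$. To complete your argument you would either need to cite one of these results (at which point the detour through discretization and $\ker(\chi'_0)$ becomes unnecessary --- the cited propositions apply directly to any character vanishing on $K'$) or supply a full proof of the tree-rigidity step, which is substantially more than a routine verification. As written, the proposal has a gap at its central step.
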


\begin{proof}
 Let $H$ be a finite index subgroup of $G$ that embeds with finite index into $G'$. We will abuse notation and write $H$ also for the finite index image of $H$ in $G'$. Since $G'$ splits non-trivially over $L$, we know $H$ decomposes as the fundamental group of a finite reduced graph of groups $\mathcal{G}$ whose edge groups are $H$ intersected with conjugates of $L$ in $G'$. Since $H$ has finite index in $G'$, these edge groups are all isomorphic to finite index subgroups of $L$. Let $K\le H$ be one of these edge groups, for example just take $K\defeq H\cap L$. First suppose $\mathcal{G}$ is a strictly ascending HNN-extension, say $H=K*_t$. Then for any $\psi\in\Hom(H,\R)$ such that $\psi(K)=0$, if moreover $\psi(t)=0$ then $\psi=0$ and $[\pm\psi]\in\Sigma^1(H)^c$ by our convention. If $\psi(K)=0$ and $\psi(t)\ne 0$ then either $[\psi]$ or $[-\psi]$ lies in $\Sigma^1(G)^c$ (see for instance \cite[Proposition~2.1]{bieri10}). Next suppose $\mathcal{G}$ is an ascending HNN-extension that is not strict, i.e., $H\cong K \rtimes \Z$. Then $G$ is virtually of the form $K\rtimes \Z$, which we ruled out. Finally suppose $\mathcal{G}$ is not an ascending HNN-extension. Then \cite[Proposition~2.5]{cashen16} says that for any $\psi\in\Hom(H,\R)$, if $\psi(K)=0$ then $[\psi]\in\Sigma^1(H)^c$. In any case, for any $\chi\in\Hom(G,\R)$ with $\chi(K)=0$, at least one of $[\pm\chi|_H]\in\Sigma^1(H)^c$, so by Citation~\ref{cit:bns_fi} also at least one of $[\pm\chi]\in\Sigma^1(G)^c$.
\end{proof}

Now we specialize to RAAGs.

\begin{lemma}\label{lem:dead_clique}
 Let $\Gamma$ be a finite simplicial graph and let $K$ be an abelian subgroup of $A_\Gamma$. Let $\Delta_K\subseteq \Gamma$ be the induced subgraph supported on those vertices $v$ such that $v\in \sqrt{K[A_\Gamma,A_\Gamma]}$. Then $\Delta_K$ is a clique.
\end{lemma}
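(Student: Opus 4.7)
The plan is to argue by contradiction: I would assume that $v$ and $w$ are two distinct vertices of $\Delta_K$ that are \emph{not} adjacent in $\Gamma$, and derive a contradiction from the abelianness of $K$.

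Unpacking the hypothesis, there exist nonzero integers $p, q$ and elements $k_1, k_2 \in K$ together with $c_1, c_2 \in [A_\Gamma, A_\Gamma]$ such that $v^p = k_1 c_1$ and $w^q = k_2 c_2$. The key device is the map $r \colon A_\Gamma \to \langle v, w \rangle$ sending $v \mapsto v$, $w \mapsto w$, and every other vertex of $\Gamma$ to $1$. To see this is a well-defined homomorphism I would check each defining relation $[u_1,u_2] = 1$ (for $\{u_1,u_2\} \in E(\Gamma)$); the only case that could obstruct the check is $\{u_1,u_2\} = \{v,w\}$, which is excluded precisely because $v$ and $w$ are non-adjacent. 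The target $\langle v, w \rangle \le A_\Gamma$ is the RAAG on the induced edgeless subgraph on $\{v,w\}$, so $\langle v, w \rangle \cong F_2$, the free group of rank two.

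Next I would push the setup through $r$. The image $r(K)$ is an abelian subgroup of the free group $F_2$ and is therefore cyclic, say $r(K) = \langle z \rangle$. Applying $r$ to $v^p = k_1 c_1$ and $w^q = k_2 c_2$ gives $v^p, w^q \in \langle z \rangle \cdot [F_2,F_2]$, and composing with the abelianization $F_2 \to \Z^2 = \Z e_v \oplus \Z e_w$ places both $p\,e_v$ and $q\,e_w$ in the cyclic subgroup $\Z \bar z \subseteq \Z^2$, where $\bar z$ is the image of $z$. With $p, q \neq 0$, this would require $\bar z$ to be a nonzero rational multiple of $e_v$ and of $e_w$ simultaneously, which is impossible in $\Z^2$. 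The resulting contradiction shows $v$ and $w$ must be adjacent in $\Gamma$, hence $\Delta_K$ is a clique.

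What makes the argument work is that the non-adjacency of $v$ and $w$ unlocks a retraction onto a free group of rank two, inside which abelian subgroups are cyclic and the two free generators are ``linearly independent directions'' in the abelianization. I expect the only step needing real attention to be the verification that $r$ is a well-defined homomorphism, and this is a routine check given the non-adjacency hypothesis.
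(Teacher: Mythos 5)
Your proof is correct and follows essentially the same route as the paper: retract onto the free group $\langle v,w\rangle\cong F_2$ generated by the two non-adjacent vertices, use the structure of abelian subgroups of $F_2$ (you phrase this as ``$r(K)$ is cyclic,'' the paper as ``two nontrivial commuting elements have a common power''), and abelianize to $\Z^2$ to reach the contradiction. No gaps.
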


\begin{proof}
 Suppose $v$ and $w$ are distinct vertices in $\Delta_K$, say with $v^q c,w^r d \in K$ for $q,r\in\Z\setminus\{0\}$ and $c,d\in [A_\Gamma,A_\Gamma]$. Since $K$ is abelian, $v^q c$ and $w^r d$ commute. Now suppose $v$ and $w$ are not adjacent, so there is a retract $\pi \colon A_\Gamma \to F_2=\langle v,w\rangle$. We have that $\pi(v^q c)=v^q \pi(c)$ and $\pi(w^r d)=w^r \pi(d)$ commute in $F_2$. Since neither is trivial, this means that $(v^q \pi(c))^a = (w^r \pi(d))^b$ for some $a,b\in\Z\setminus\{0\}$. Abelianizing $F_2$ to $\Z^2=\langle \overline{v},\overline{w}\rangle$ this produces $qa\overline{v}=rb\overline{w}$, with $qa,rb\ne 0$, which is absurd.
\end{proof}

\begin{corollary}\label{cor:dead_clique}
 Let $\Gamma$ be a finite simplicial graph and let $K\le A_\Gamma$ with $K\cong \Z^k$. Then there exists $\chi\in\Hom(A_\Gamma,\R)$ such that $\chi(K)=0$ and the $\chi$-dead subgraph $\Gamma^\dagger_\chi$ is an $\ell$-clique for some $0\le \ell\le k$.
\end{corollary}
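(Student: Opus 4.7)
The plan is to combine Proposition~\ref{prop:kill_subgroup} with Lemma~\ref{lem:dead_clique}, using the fact that the standard vertex generating set of a RAAG achieves the dimension bound in Proposition~\ref{prop:kill_subgroup}.

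First I would verify that $A_\Gamma$, together with the vertex generating set $S \defeq V(\Gamma)$, satisfies the hypothesis $\dim_\R(V) = |S|$. This is immediate: the abelianization $A_\Gamma/[A_\Gamma,A_\Gamma]$ is the free abelian group on $V(\Gamma)$, so $V = (A_\Gamma/[A_\Gamma,A_\Gamma]) \otimes \R$ is an $\R$-vector space of dimension exactly $|V(\Gamma)|$, and the images of the vertices form an $\R$-basis.

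Next I would apply Proposition~\ref{prop:kill_subgroup} with $J \defeq K$. Since $K \cong \Z^k$ is generated by $k$ elements, the proposition produces a character $\chi \in \Hom(A_\Gamma,\R)$ with $\ker(\chi) = \sqrt{K[A_\Gamma,A_\Gamma]}$, and moreover at most $k$ elements of $V(\Gamma)$ lie in $\ker(\chi)$. By construction $K \subseteq \ker(\chi)$, so $\chi(K) = 0$, fulfilling the first requirement.

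Finally, I would identify the $\chi$-dead subgraph. A vertex $v$ lies in $\Gamma^\dagger_\chi$ precisely when $\chi(v) = 0$, equivalently when $v \in \sqrt{K[A_\Gamma,A_\Gamma]}$, which is exactly the defining condition for the subgraph $\Delta_K$ in Lemma~\ref{lem:dead_clique}. Hence $\Gamma^\dagger_\chi = \Delta_K$. Lemma~\ref{lem:dead_clique} (which applies since $K$ is abelian) tells us this subgraph is a clique, and the bound from Proposition~\ref{prop:kill_subgroup} tells us it has at most $k$ vertices. Thus $\Gamma^\dagger_\chi$ is an $\ell$-clique for some $0 \le \ell \le k$, as desired. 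No real obstacle arises here; the work has all been done in the preceding proposition and lemma, and this corollary is essentially a packaging statement.
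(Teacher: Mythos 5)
Your proposal is correct and follows exactly the paper's argument: apply Proposition~\ref{prop:kill_subgroup} with $S=V(\Gamma)$ and $J=K$ (noting the abelianization of $A_\Gamma$ is $\Z^{|V(\Gamma)|}$), observe that the dead subgraph is precisely $\Delta_K$, and invoke Lemma~\ref{lem:dead_clique} together with the bound of at most $k$ vertices. No differences worth noting.
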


\begin{proof}
 Choose $\chi$ as in Proposition~\ref{prop:kill_subgroup} with $G=A_\Gamma$, $S=V(\Gamma)$ and $J=K$. Then $\chi(K)=0$ and $v\in V(\Gamma)$ satisfies $v\in \Gamma^\dagger_\chi$ if and only if $v\in \sqrt{K[A_\Gamma,A_\Gamma]}$. Since the abelianization of $A_\Gamma$ is $\Z^{|V(\Gamma)|}$, Proposition~\ref{prop:kill_subgroup} also says that at most $k$ vertices satisfy this, and Lemma~\ref{lem:dead_clique} says they must span a clique.
\end{proof}

Proposition~\ref{prop:negative_statement} applied to the $L=\Z^k$ case said that a RAAG commensurable to a group splitting over $\Z^k$ contains a copy of $\Z^k$ that cannot be killed by a pair of opposite characters $\pm\chi$ in the BNS-invariant. This next proposition says that for a RAAG that does not obviously split over $\Z^k$, any copy of $\Z^k$ can be killed by a pair of opposite characters $\pm\chi$ in the BNS-invariant.

\begin{proposition}\label{prop:positive_statement}
 Let $\Gamma$ be a finite simplicial graph with no separating $k$-cliques for any $k\le n$. Then for any proper subgroup $K\cong \Z^k$ of $A_\Gamma$ with $k\le n$, there exists a character $\chi\in\Hom(A_\Gamma,\R)$ such that $\chi(K)=0$ but $[\pm\chi]\in\Sigma^1(A_\Gamma)$.
\end{proposition}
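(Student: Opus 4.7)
The plan is to choose $\chi$ by applying Corollary~\ref{cor:dead_clique} to $K$, obtaining $\chi\in\Hom(A_\Gamma,\R)$ with $\chi(K)=0$ and dead subgraph $\Gamma^\dagger_\chi$ equal to an $\ell$-clique $\Delta_K$ for some $0\le\ell\le k$. I then aim to verify the Meier--VanWyk criterion (Citation~\ref{cit:bns_raag}) for $[\chi]\in\Sigma^1(A_\Gamma)$: that $\Gamma^*_\chi=\Gamma\setminus \Delta_K$ is connected and dominating. Since the living subgraph depends only on the support of $\chi$ and not on its sign, $\Gamma^*_{-\chi}=\Gamma^*_\chi$, so the same check yields $[-\chi]\in\Sigma^1(A_\Gamma)$ at no additional cost.

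Non-triviality and connectedness should be immediate. Note first that $\Gamma$ is tacitly non-clique (the statement would otherwise fail for $A_\Gamma=\Z^m$ and $K=2\Z^m$, and the non-clique assumption is the one used in Theorem~\ref{thrm:comm_split}). Because $\Delta_K$ is a clique while $\Gamma$ is not, we get $\Delta_K\subsetneq V(\Gamma)$, so $\Gamma^*_\chi$ is non-empty and $\chi\ne 0$. Next, $\Delta_K$ is an $\ell$-clique with $\ell\le k\le n$, and by hypothesis $\Gamma$ admits no separating $\ell$-cliques, so $\Gamma\setminus \Delta_K=\Gamma^*_\chi$ remains connected.

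The hard part will be the domination condition, which I will handle by contradiction. Suppose some $v\in\Delta_K$ has $\lk_\Gamma v\subseteq \Delta_K$. Since $v\notin \lk_\Gamma v$, the link lies in the clique $\Delta_K\setminus\{v\}$, so $\lk_\Gamma v$ is itself a clique of size at most $\ell-1\le k-1<k\le n$. The key observation is that such a $\lk_\Gamma v$ would have to be separating: removing it from $\Gamma$ leaves $v$ isolated, while the non-empty set $\Gamma^*_\chi$ is disjoint from $\st_\Gamma v=\{v\}\cup \lk_\Gamma v\subseteq \Delta_K$ and hence survives in a different connected component from $v$. This yields a separating clique of size strictly less than $n$, contradicting the hypothesis. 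Consequently every $v\in\Delta_K$ has a neighbor in $\Gamma^*_\chi$, so $\Gamma^*_\chi$ is dominating.

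Combining the two conditions, Citation~\ref{cit:bns_raag} gives $[\chi]\in\Sigma^1(A_\Gamma)$, and the symmetry noted above gives $[-\chi]\in\Sigma^1(A_\Gamma)$. The whole argument pivots on two pieces of structural input: the clique-dead-subgraph control provided by Corollary~\ref{cor:dead_clique}, and the fact that a would-be obstructing vertex $v$ gives a \emph{smaller} separating clique (one smaller than $\Delta_K$, hence strictly smaller than $k$), which is exactly the regime excluded by the no-separating-$k$-cliques hypothesis.
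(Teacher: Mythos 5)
Your proof is correct and follows essentially the same route as the paper's: apply Corollary~\ref{cor:dead_clique} to kill $K$ with an $\ell$-clique dead subgraph, then verify the Meier--VanWyk criterion, with a failure of domination at $v$ producing the separating clique $\lk_\Gamma v$ of size less than $n$. The only divergence is the clique case: the paper keeps it and argues that properness of $K$ forces $\Gamma^\dagger_\chi\ne\Gamma$ (which, as your $K=2\Z^m\le\Z^m$ example shows, is not quite right, since $\sqrt{K[A_\Gamma,A_\Gamma]}$ can be all of $A_\Gamma$ for a proper finite-index $K$), whereas you add the non-clique hypothesis that is anyway in force in Theorem~\ref{thrm:comm_split} --- a legitimate and arguably more careful resolution.
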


\begin{proof}
 Choose $\chi$ as in Corollary~\ref{cor:dead_clique}, so $\chi(K)=0$ and $\Gamma^\dagger_\chi$ is an $\ell$-clique for some $0\le \ell\le k$. If $\Gamma$ is a clique, then since $K$ is a proper subgroup of $A_\Gamma$ we know $\Gamma^\dagger_\chi$ is not all of $\Gamma$, so in this case $\Gamma^*_\chi$ is connected and dominating. Now assume $\Gamma$ is not a clique. Since $\Gamma$ has no separating $\ell$-cliques, $\Gamma^*_\chi$ is connected. Also, it must be dominating since if $\st_\Gamma(v)$ lies in $\Gamma^\dagger_\chi$ then $\st_\Gamma(v)$ is an $\ell'$-clique for some $\ell'\le \ell$, and since $\Gamma$ is not a clique this means $\lk_\Gamma(v)$ is a separating $(\ell'-1)$-clique, which we have ruled out. In either case Citation~\ref{cit:bns_raag} says $[\chi]\in\Sigma^1(A_\Gamma)$. Since $\Gamma^*_\chi=\Gamma^*_{-\chi}$ we also have $[-\chi]\in\Sigma^1(A_\Gamma)$.
\end{proof}

Now we can prove our main results.

\begin{theorem}\label{thrm:comm_split}
 Let $\Gamma$ be a finite simplicial non-clique graph with no separating $k$-cliques for any $k\le n$. Then $A_\Gamma$ is not commensurable to any group splitting non-trivially over $\Z^k$, for any $k\le n$.
\end{theorem}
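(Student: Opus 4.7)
The plan is to derive a contradiction by combining the two propositions of this section, which were set up precisely for this purpose. Assume, for contradiction, that $A_\Gamma$ is commensurable to a group $G'$ that splits non-trivially over $\Z^k$ for some $k\le n$. I want to apply Proposition~\ref{prop:negative_statement} with $G=A_\Gamma$ and $L=\Z^k$ to obtain a copy of $\Z^k$ inside $A_\Gamma$ that no pair of opposite characters in $\Sigma^1(A_\Gamma)$ can kill, and then to contradict this using Proposition~\ref{prop:positive_statement}.

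The first thing to verify is that the exceptional hypothesis of Proposition~\ref{prop:negative_statement} does not hold, i.e.\ that $A_\Gamma$ is not virtually of the form $K\rtimes \Z$ for any finite index subgroup $K$ of $\Z^k$. Any finite index subgroup of $\Z^k$ is again free abelian, so such a semidirect product is polycyclic and hence virtually solvable. But $\Gamma$ is assumed to be a non-clique graph, so it has two non-adjacent vertices, which generate a free subgroup $F_2\le A_\Gamma$; in particular $A_\Gamma$ is not virtually solvable. So the exceptional case is ruled out, and Proposition~\ref{prop:negative_statement} applies.

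This gives a subgroup $K\le A_\Gamma$ isomorphic to a finite index subgroup of $\Z^k$, hence $K\cong\Z^k$, with the property that whenever $\chi\in\Hom(A_\Gamma,\R)$ vanishes on $K$, at least one of $[\pm\chi]$ lies in $\Sigma^1(A_\Gamma)^c$. Since $\Gamma$ is not a clique, $A_\Gamma$ is non-abelian, so the abelian subgroup $K$ is proper in $A_\Gamma$. Now Proposition~\ref{prop:positive_statement}, applied to this proper $\Z^k$-subgroup (here $k\le n$ and $\Gamma$ has no separating $k$-cliques for $k\le n$), produces a character $\chi\in\Hom(A_\Gamma,\R)$ with $\chi(K)=0$ and $[\pm\chi]\in\Sigma^1(A_\Gamma)$. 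This is the desired contradiction.

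There is really no hard step here: the nontrivial content has already been packed into Propositions~\ref{prop:negative_statement} and~\ref{prop:positive_statement} (and behind them, Proposition~\ref{prop:split_clique} and the description of $\Sigma^1(A_\Gamma)$ via living/dominating subgraphs). The only technical care needed is the verification that $A_\Gamma$ is not virtually a $\Z^k\rtimes\Z$, which is immediate from the fact that a non-clique RAAG contains $F_2$.
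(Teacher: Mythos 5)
Your proposal is correct and follows essentially the same argument as the paper: rule out the exceptional case of Proposition~\ref{prop:negative_statement} because a non-clique RAAG contains $F_2$, obtain the unkillable $K\cong\Z^k$, and contradict it with Proposition~\ref{prop:positive_statement}. Your extra check that $K$ is a proper subgroup (so that Proposition~\ref{prop:positive_statement} applies) is a detail the paper leaves implicit, and it is a welcome one.
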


\begin{proof}
 Suppose $A_\Gamma$ is commensurable to a group splitting non-trivially over $\Z^k$. By Proposition~\ref{prop:negative_statement} using $L=\Z^k$ (which applies since $A_\Gamma$ contains $F_2$ and hence cannot be virtually of the form $\Z^k\rtimes\Z$) there exists a subgroup $K\cong \Z^k$ of $A_\Gamma$ such that for any $\chi\in\Hom(A_\Gamma,\R)$ if $\chi(K)=0$ then at least one of $[\pm\chi]$ lies in $\Sigma^1(A_\Gamma)^c$ (in fact both do since $\Sigma^1(A_\Gamma)$ happens to be closed under inverting characters). But by Proposition~\ref{prop:positive_statement} we know that there exists a character $\chi\in\Hom(A_\Gamma,\R)$ such that $\chi(K)=0$ but $[\pm\chi]\in\Sigma^1(A_\Gamma)$, a contradiction.
\end{proof}

We immediately get the following commensurability invariant for RAAGs.

\begin{corollary}\label{cor:comm_cliques}
 If $A_\Gamma$ and $A_\Delta$ are commensurable and $\Gamma$ has no separating $k$-cliques for any $k\le n$, then neither does $\Delta$.
\end{corollary}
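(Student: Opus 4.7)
The plan is to deduce the corollary by contrapositive, using Proposition~\ref{prop:split_clique} to convert the hypothesised separating clique in $\Delta$ into an abelian splitting of $A_\Delta$, and then invoking Theorem~\ref{thrm:comm_split} as a black box. Suppose for contradiction that $\Delta$ does admit a separating $k$-clique for some $k\le n$, and let $k_0\le n$ be the smallest such $k$.

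First I would observe that $\Delta$ cannot itself be a clique, since a clique has no separating subclique. Hence Proposition~\ref{prop:split_clique} applies to $\Delta$, and because $k_0$ is the minimal separating-clique size in $\Delta$ it produces a non-trivial splitting of $A_\Delta$ over $\Z^{k_0}$. Since $A_\Gamma$ is commensurable to $A_\Delta$, this exhibits $A_\Gamma$ as commensurable to a group splitting non-trivially over $\Z^{k_0}$ with $k_0\le n$.

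Provided $\Gamma$ is not a clique, Theorem~\ref{thrm:comm_split} then applies directly to $\Gamma$ and to $k_0\le n$ and contradicts the hypothesis that $\Gamma$ has no separating $k$-clique for any $k\le n$. The only case not covered is when $\Gamma$ is itself a clique, which I would dispatch separately: in that case $A_\Gamma\cong \Z^{|V(\Gamma)|}$ is abelian, so any group commensurable to $A_\Gamma$ is virtually abelian and in particular contains no copy of $F_2$; but any two non-adjacent vertices of $\Delta$ generate a copy of $F_2$ in $A_\Delta$ (via the standard retract of $A_\Delta$ onto the free group on those two vertices), so this would force $\Delta$ to be a clique, trivially contradicting the existence of a separating $k$-clique in $\Delta$.

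I expect no real obstacle here, since Theorem~\ref{thrm:comm_split} and Proposition~\ref{prop:split_clique} do all of the substantive work. The only point that deserves some care is the non-clique hypothesis that both of those results require, which is why the argument is naturally split into the two cases above.
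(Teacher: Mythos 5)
Your proof is correct and follows essentially the same route as the paper: combine Proposition~\ref{prop:split_clique} with Theorem~\ref{thrm:comm_split}, treating the case where $\Gamma$ is a clique separately. Your handling of the clique case (via virtual abelianness forcing $\Delta$ to be a clique) is a mild variant of the paper's observation that $\Gamma=\Delta$ in that case, and is fine.
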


\begin{proof}
 If $\Gamma$ is itself a clique then we must have $\Gamma=\Delta$. If $\Gamma$ is not a clique then the result is immediate from Proposition~\ref{prop:split_clique} and Theorem~\ref{thrm:comm_split}.
\end{proof}

We also get the following corollary in the special case where $\Gamma$ has no separating cliques at all. Recall from the introduction that any NF subgroup (that is, one containing no non-abelian free subgroups) of a RAAG is abelian.

\begin{corollary}\label{cor:comm_nf_split}
 Let $\Gamma$ be a finite simplicial non-clique graph with no separating cliques. Then $A_\Gamma$ is not commensurable to any group splitting non-trivially over an NF subgroup.
\end{corollary}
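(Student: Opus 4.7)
The plan is to reduce Corollary~\ref{cor:comm_nf_split} to Theorem~\ref{thrm:comm_split} by showing that a splitting over an NF subgroup becomes a splitting over some $\Z^k$ after passage to a finite-index subgroup of $A_\Gamma$. Suppose for contradiction that $A_\Gamma$ is commensurable to a group $G'$ which splits non-trivially over an NF subgroup $L$. Choose a common finite-index subgroup $H$ of $A_\Gamma$ and $G'$. The splitting of $G'$ corresponds to an action on a Bass--Serre tree $T$ with no global fixed point, and since $H$ has finite index in $G'$, a standard orbit/circumcenter argument in trees shows that $H$ also acts on $T$ without a global fixed point. This yields a non-trivial graph-of-groups decomposition of $H$ whose edge groups have the form $H\cap gLg^{-1}$, each of finite index in a conjugate of $L$ and hence NF (being NF passes to arbitrary subgroups).

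Next I would fix any single edge group $K$ of this decomposition of $H$, and view it through $K\le H\le A_\Gamma$. The strong Tits alternative for RAAGs recalled in the introduction --- every pair of elements of a RAAG either commutes or generates $F_2$ --- immediately forces the NF subgroup $K$ to be abelian. Invoking the standard fact that every abelian subgroup of a RAAG is finitely generated (for example via the structure of centralizers due to Servatius), together with torsion-freeness of $A_\Gamma$, we conclude $K\cong\Z^k$ for some $k\ge 0$.

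Therefore $H$ splits non-trivially over $\Z^k$, and so $A_\Gamma$ is commensurable to the group $H$ itself, which splits non-trivially over $\Z^k$. Our hypothesis that $\Gamma$ has no separating cliques whatsoever gives in particular no separating $j$-cliques for any $j\le k$, and Theorem~\ref{thrm:comm_split} then produces the desired contradiction.

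The main obstacle I anticipate is the passage from the splitting of $G'$ to a non-trivial splitting of $H$ with the claimed edge groups: this is routine Bass--Serre theory but should be laid out with some care, in particular verifying non-triviality by way of the absence of a global fixed point and identifying the edge stabilizers as $H\cap gLg^{-1}$. A secondary point is justifying that $K$ is actually finitely generated rather than merely abelian; this is where a standard structural fact about RAAGs is invoked, and it is the only ingredient beyond Theorem~\ref{thrm:comm_split} and the Tits alternative.
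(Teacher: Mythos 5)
Your argument is correct, but it takes a genuinely different route from the paper: you reduce the corollary to Theorem~\ref{thrm:comm_split} via Bass--Serre theory, whereas the paper re-runs the BNS argument directly. Concretely, the paper applies Proposition~\ref{prop:negative_statement} with $L$ the NF subgroup to produce an NF subgroup $K\le A_\Gamma$ that cannot be killed by any pair $[\pm\chi]$ with both classes in $\Sigma^1(A_\Gamma)$; the Tits alternative makes $K$ abelian, and then Proposition~\ref{prop:kill_subgroup} together with Lemma~\ref{lem:dead_clique} --- both of which apply to an \emph{arbitrary} abelian subgroup, with no finite generation hypothesis --- yield a character killing $K$ whose dead subgraph is a clique, contradicting the absence of separating cliques. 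Your reduction instead extracts a non-trivial splitting of a common finite-index subgroup $H$ over some edge group $K=H\cap gLg^{-1}$ (note: you get \emph{some} edge group after reducing and collapsing to a one-edge splitting, not an arbitrarily chosen one, but this is harmless since every edge group is NF), identifies $K\cong\Z^k$, and quotes the theorem. The Bass--Serre bookkeeping you flag as the main obstacle is exactly what the paper has already packaged into Proposition~\ref{prop:negative_statement}, so nothing new is needed there. What your route buys is that the corollary becomes a formal consequence of the theorem; what it costs is the additional input that abelian subgroups of $A_\Gamma$ are finitely generated (true, e.g.\ by the Solvable Subgroup Theorem for $\CAT(0)$ groups or Servatius's description of centralizers), which the paper's proof deliberately avoids. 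Your argument is also consistent with the remark following the corollary: the rank $k$ you obtain is bounded only by the clique number of $\Gamma$ and not by any data from $L$, which is precisely why the hypothesis must exclude separating cliques of every size rather than up to some bound.
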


\begin{proof}
 Suppose $A_\Gamma$ is commensurable to a group that splits non-trivially over an NF subgroup. By Proposition~\ref{prop:negative_statement}, which applies since $A_\Gamma$ is not (virtually) NF, there exists an NF subgroup $K\le A_\Gamma$ such that for any $\chi\in\Hom(A_\Gamma,\R)$ if $\chi(K)=0$ then at least one of $[\pm\chi]$ lies in $\Sigma^1(A_\Gamma)^c$. Since NF subgroups of RAAGs are abelian, in fact $K$ is abelian, so by Proposition~\ref{prop:kill_subgroup} and Lemma~\ref{lem:dead_clique} we can choose $\chi\in\Hom(A_\Gamma,\R)$ such that $\chi(K)=0$ and $\Gamma^\dagger_\chi$ is a clique. Since $\Gamma$ has no separating cliques, this implies $[\pm\chi]\in\Sigma^1(A_\Gamma)$, as explained in the proof of Proposition~\ref{prop:positive_statement}, a contradiction.
\end{proof}

As a remark, if $A_\Gamma$ is commensurable to a group splitting non-trivially over an NF subgroup generated by $k$ elements, then in general we cannot control the number of generators of the subgroup $K$ described in the proof, and hence cannot control the size of the clique $\Gamma^\dagger_\chi$. Of course if the NF subgroup is $\Z^k$ then $K$ is also $\Z^k$, since finite index subgroups of $\Z^k$ are isomorphic to $\Z^k$ (which is why Theorem~\ref{thrm:comm_split} worked), but in general we do not get a statement like Corollary~\ref{cor:comm_nf_split} if we merely rule out separating cliques up to some size; we really need to rule out all separating cliques.


\section{Quasi-isometry results for RAAGs}\label{sec:qi_raags}

This brief section amounts to a collection of examples of results about quasi-isometry, which follow immediately from our results about commensurability together with results by Huang in \cite{huang14,huang16a,huang16} tying commensurability to quasi-isometry.

First we need one technical lemma, the proof of which is essentially due to Jingyin Huang.

\begin{lemma}\label{lem:fin_out}
 Let $\Gamma$ be a finite simplicial graph. Suppose $\Out(A_\Gamma)$ is finite. Then $\Gamma$ has no separating cliques.
\end{lemma}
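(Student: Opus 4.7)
The plan is to prove the contrapositive: if $\Gamma$ has a separating clique, then $\Out(A_\Gamma)$ is infinite. By the Servatius--Laurence characterization quoted in the introduction, ``$\Out(A_\Gamma)$ finite'' is equivalent to ``$\Gamma$ has no separating closed stars and no instances of $\lk v \subseteq \st w$ for $v \neq w$,'' so it suffices to extract one of these two structural features from the assumption of a separating clique.

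I would take $K$ to be a separating clique chosen to be \emph{minimal} in the sense that no proper subclique of $K$ separates, and let $C_1, \ldots, C_m$ (with $m \geq 2$) be the components of $\Gamma \setminus K$. The minimality of $K$ forces that every $v \in K$ has a neighbor in every $C_i$, since otherwise $K \setminus \{v\}$ would still separate that component from the rest. The main case split is then on whether some $v \in K$ has non-neighbors in two distinct components: if so, the sets $C_i \setminus \lk v$ for distinct $i$ lie in separate components of $\Gamma \setminus \st v$ (any path between distinct $C_i$'s in $\Gamma$ must pass through $K \subseteq \st v$), which produces a separating closed star. Otherwise every $v \in K$ has non-neighbors in at most one component, and I would further split on whether some component $C$ is \emph{fully dominated} by $K$, meaning every $v \in K$ is adjacent to every vertex of $C$: in that situation, any $u \in C$ and any $w \in K$ satisfy $\lk u \subseteq K \cup C \subseteq \st w$ with $u \neq w$, giving the link-star containment. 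The corner case $K = \emptyset$ corresponds to $\Gamma$ being disconnected, and is handled directly: an isolated vertex $v$ yields $\lk v = \emptyset \subseteq \st w$; a non-clique component contains a $v$ with $\Gamma \setminus \st v$ disconnected (separating star); and a multi-vertex clique component furnishes $\lk v \subseteq \st w$ among its own vertices.

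The subtle case---$K \neq \emptyset$, every $v \in K$ has non-neighbors in at most one component, and no component is fully dominated by $K$---is where I expect the main obstacle. Here I would fix any component $C_1$ and argue that there must still exist some $w \in K$ adjacent to every vertex of $C_1$: if not, every $w \in K$ would have a non-neighbor in $C_1$, and the ``at most one component'' hypothesis would then force each $w \in K$ to be adjacent to all of $C_2$, making $C_2$ fully dominated by $K$---contradicting the sub-case assumption. Given such a $w$, any $u \in C_1$ has $\lk u \subseteq K \cup C_1 \subseteq \st w$ with $u \neq w$, completing the proof.
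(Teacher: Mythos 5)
Your argument is correct and is essentially the paper's own proof (also run through the characterization of finite $\Out(A_\Gamma)$ via separating closed stars and containments $\lk u\subseteq\st w$): your first case is exactly the observation that $\st v$ would be a separating closed star, and your remaining cases all reduce to some component of $\Gamma\setminus K$ lying inside $\st w$ for some $w\in K$, which yields $\lk u\subseteq \st w$. The only difference is that your minimality assumption on $K$ and the ``fully dominated'' sub-case split are never actually needed---once no $v\in K$ has non-neighbors in two distinct components, any single $v\in K$ already contains some whole component $C_i$ in its star, which is how the paper concludes in one step.
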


\begin{proof}[Proof (Jingyin Huang)]
 Since $\Out(A_\Gamma)$ is finite we know $\Gamma$ has no separating closed stars, and no instances of $\lk v\subseteq \st w$ for vertices $v\ne w$. Now suppose $\Gamma$ has a separating clique $K$, say the connected components of its complement are $C_1,\dots,C_k$, so $k\ge 2$. If $K=\emptyset$ (i.e., it is a $0$-clique) then $\Gamma$ is disconnected and has infinite outer automorphism group, so we know $K\ne \emptyset$. Pick a vertex $v\in K$ so $K\subseteq \st v$. Since $\st v$ is not separating, at most one of the $C_i \setminus \st v$ can be non-empty. Since $k\ge 2$ this means at least one of the $C_i \setminus \st v$ must be empty, say without loss of generality $C_1 \setminus \st v = \emptyset$, i.e., $C_1\subseteq \st v$. But now for any vertex $w$ in $C_1$, we have $\lk w \subseteq C_1 \cup K\subseteq \st v$, a contradiction.
\end{proof}

\begin{corollary}
 Suppose $A_\Gamma$ and $A_\Delta$ are quasi-isometric, and that $\Out(A_\Gamma)$ is finite, so by Lemma~\ref{lem:fin_out} we know $\Gamma$ has no separating cliques. Then $\Delta$ also has no separating cliques.
\end{corollary}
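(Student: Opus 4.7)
The plan is to reduce the quasi-isometric statement to our commensurability result, using Huang's theorem from \cite{huang14} as the bridge. The hypothesis that $\Out(A_\Gamma)$ is finite is precisely the setting of Huang's theorem: in that case, a RAAG $A_\Delta$ is quasi-isometric to $A_\Gamma$ if and only if it is abstractly commensurable to $A_\Gamma$. So from the assumed quasi-isometry we immediately upgrade to commensurability of $A_\Gamma$ and $A_\Delta$.

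Once commensurability is in hand, this is just a packaging of Corollary~\ref{cor:comm_cliques}. By Lemma~\ref{lem:fin_out}, finiteness of $\Out(A_\Gamma)$ already tells us that $\Gamma$ has no separating $k$-cliques for any $k\ge 0$. Applying Corollary~\ref{cor:comm_cliques} with $n$ taken arbitrarily large (say $n=|V(\Delta)|$, which exceeds the size of any clique in $\Delta$), we conclude that $\Delta$ also admits no separating $k$-cliques for any $k$, i.e., none at all.

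The only thing to watch is the degenerate possibility that $\Gamma$ or $\Delta$ is itself a clique, since Corollary~\ref{cor:comm_cliques} excludes that case in its statement. But if $|V(\Gamma)|\ge 2$ and $\Gamma$ is a clique, then $A_\Gamma\cong\Z^{|V(\Gamma)|}$ has outer automorphism group $GL_{|V(\Gamma)|}(\Z)$, which is infinite, contradicting the hypothesis; and if $|V(\Gamma)|\le 1$ the statement is trivial, since any RAAG quasi-isometric to the trivial group or to $\Z$ is isomorphic to it and vacuously has no separating cliques. Hence there is no real obstacle here; all the substantive work has already been done, and the corollary is a direct consequence.
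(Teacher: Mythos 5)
Your proof is correct and is essentially the paper's own argument: invoke Huang's theorem (\cite[Theorem~1.2]{huang14}) to upgrade quasi-isometry to commensurability in the finite $\Out$ case, then apply Corollary~\ref{cor:comm_cliques}. (One small note: the statement of Corollary~\ref{cor:comm_cliques} does not actually exclude the clique case --- its proof handles it --- but your extra discussion of that degenerate case is harmless and correct.)
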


\begin{proof}
 This follows from \cite[Theorem~1.2]{huang14} and Corollary~\ref{cor:comm_cliques}.
\end{proof}

\begin{corollary}
 Suppose $A_\Gamma$ and $A_\Delta$ are quasi-isometric and $\Gamma$ is of \emph{weak type I} or \emph{type II} as defined in \cite{huang16}. Then if $\Gamma$ has no separating $k$-cliques for any $k\le n$, neither does $\Delta$.
\end{corollary}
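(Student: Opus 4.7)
The plan is to mirror the proof of the previous corollary: combine a ``quasi-isometry implies commensurability'' statement from Huang's paper \cite{huang16} with our commensurability invariant Corollary~\ref{cor:comm_cliques}. Specifically, the main theorems of \cite{huang16} assert that if $\Gamma$ is of weak type I or type II and $A_\Delta$ is quasi-isometric to $A_\Gamma$, then $A_\Gamma$ and $A_\Delta$ are in fact abstractly commensurable. Given this, the corollary follows by a one-line argument.

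In more detail, first I would locate the precise theorems in \cite{huang16} for the weak type I and type II cases (these are the analogues for those graph classes of \cite[Theorem~1.2]{huang14} used in the preceding corollary). Second, I would note that Corollary~\ref{cor:comm_cliques} directly converts the commensurability between $A_\Gamma$ and $A_\Delta$ into the required conclusion: since $\Gamma$ has no separating $k$-cliques for any $k\le n$, neither does $\Delta$. No further analysis of the BNS-invariant is needed here, because all of that work has already been packaged into Corollary~\ref{cor:comm_cliques}.

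The only real obstacle is making sure the citation is targeted correctly, since the hypotheses ``weak type I'' and ``type II'' are technical and the conversion from quasi-isometry to commensurability for these classes of graphs is the nontrivial content being imported from \cite{huang16}. Once that citation is in place, the proof reduces to two sentences of the form ``By \cite{huang16}, $A_\Gamma$ and $A_\Delta$ are commensurable; now apply Corollary~\ref{cor:comm_cliques}.'' No new computations or BNS-invariant manipulations are required.
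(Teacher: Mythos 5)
Your proposal is correct and matches the paper's proof exactly: the paper cites Theorems~1.3 and~1.6 of \cite{huang16} (which upgrade quasi-isometry to commensurability for weak type I and type II graphs) and then applies Corollary~\ref{cor:comm_cliques}.
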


\begin{proof}
 This follows from \cite[Theorems~1.3 and~1.6]{huang16} and Corollary~\ref{cor:comm_cliques}.
\end{proof}

\begin{corollary}
 Let $G$ be a finitely generated group quasi-isometric to $A_\Gamma$. Suppose that every automorphism of $\Gamma$ fixing a closed star of a vertex pointwise fixes all of $\Gamma$, that $\Gamma$ contains no induced $4$-cycles and that $\Out(A_\Gamma)$ is finite. Then $G$ does not split non-trivially over $\Z^k$ for any $k$.
\end{corollary}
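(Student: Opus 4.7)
The plan is to combine a quasi-isometric rigidity theorem of Huang with Corollary~\ref{cor:comm_nf_split}. First, since $\Out(A_\Gamma)$ is assumed finite, Lemma~\ref{lem:fin_out} tells us that $\Gamma$ has no separating cliques. Moreover, since $\Out(\Z^n) = GL_n(\Z)$ is infinite for $n\ge 2$, the hypothesis of finite $\Out(A_\Gamma)$ forces $\Gamma$ not to be a clique on two or more vertices; we may safely exclude the degenerate single-vertex case, so Corollary~\ref{cor:comm_nf_split} is applicable to $\Gamma$.

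Second, I would argue that the three specific hypotheses on $\Gamma$ in the statement---finiteness of $\Out(A_\Gamma)$, absence of induced $4$-cycles, and the requirement that every automorphism of $\Gamma$ fixing a closed star pointwise must be the identity---are precisely the conditions isolated in Huang \cite{huang14} which yield the strong form of quasi-isometric rigidity: any finitely generated group quasi-isometric to $A_\Gamma$ is abstractly commensurable to $A_\Gamma$. Applying that theorem to our $G$ upgrades the quasi-isometry hypothesis to abstract commensurability between $G$ and $A_\Gamma$.

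With commensurability in hand, the conclusion is immediate. Suppose for contradiction that $G$ splits non-trivially over some $\Z^k$. Since $\Z^k$ is abelian, in particular it is NF, and since commensurability is symmetric, $A_\Gamma$ would then be commensurable to a group (namely $G$) that splits non-trivially over an NF subgroup, directly contradicting Corollary~\ref{cor:comm_nf_split}.

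The main obstacle is verifying the QI-to-commensurability step, which is external to this paper: one has to check that the three conditions on $\Gamma$ listed in the corollary are exactly the hypotheses of a theorem in Huang's papers \cite{huang14,huang16,huang16a} whose conclusion is commensurability to $A_\Gamma$ for an \emph{arbitrary} finitely generated $G$, rather than only for another RAAG $A_\Delta$. Once this is pinned down, the rest of the argument is a formal chain: finite $\Out$ rules out separating cliques (Lemma~\ref{lem:fin_out}); QI-rigidity provides commensurability; and Corollary~\ref{cor:comm_nf_split} provides the forbidden NF-splitting.
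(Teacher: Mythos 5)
Your proof is correct and follows essentially the same route as the paper: Lemma~\ref{lem:fin_out} rules out separating cliques, Huang's quasi-isometric rigidity theorem (the paper cites \cite[Theorem~1.2]{huang16a}) upgrades quasi-isometry to commensurability, and then the paper's commensurability obstruction finishes the argument. The only cosmetic difference is that you invoke Corollary~\ref{cor:comm_nf_split} (via $\Z^k$ being NF) where the paper cites Theorem~\ref{thrm:comm_split} directly; these coincide here since $\Gamma$ has no separating cliques at all, and your explicit check that $\Gamma$ is not a clique is a small point the paper leaves implicit.
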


\begin{proof}
 Since $\Out(A_\Gamma)$ is finite, $\Gamma$ has no separating cliques by Lemma~\ref{lem:fin_out}. The result now follows from \cite[Theorem~1.2]{huang16a} and Theorem~\ref{thrm:comm_split}.
\end{proof}

In general, we would get similar sorts of results anytime there is a graph $\Gamma$ for which quasi-isometry to $A_\Gamma$ implies commensurability to $A_\Gamma$.


\section{Commensurability results for (loop) braid groups}\label{sec:braid}

In this section we apply our approach to braid groups and loop braid groups.

\subsection{Commensurability results for braid groups}\label{sec:comm_braid}

The \emph{$n$-strand braid group} is the group presented by
$$B_n=\left\langle s_1,\dots,s_{n-1}\left| \begin{array}{l}s_i s_{i+1} s_i = s_{i+1} s_i s_{i+1} \text{ for all } 1\le i\le n-2 \text{ and } \\ s_i s_j=s_j s_i \text{ for all } |i-j|>1\end{array}\right.\right\rangle\text{.}$$
There is a projection $B_n \to S_n$ given by adding the relations $s_i^2=1$ for all $i$, and the kernel of this map is the \emph{$n$-strand pure braid group} $PB_n$.

We will work with a specific generating set of $PB_n$, namely $S=\{S_{i,j}\mid 1\le i<j\le n\}$, where $S_{i,j}\defeq s_i s_{i+1}\cdots s_{j-2} s_{j-1}^2 s_{j-2}^{-1} \cdots s_{i+1}^{-1} s_i^{-1}$. Visually, in $S_{i,j}$ the $i$th strand crosses in front of all the strands between it and the $j$th strand, spins around the $j$th strand, and returns to where it came from, again crossing in front of the intermediate strands. An important fact we will use is that $PB_3\cong F_2 \times \Z$, with $S_{1,2}$ and $S_{1,3}$ serving as generators of the $F_2$ factor. We will also make use of the \emph{standard projections} $PB_n \to PB_m$ for $m<n$, obtained by deleting some collection of $n-m$ strands.

The BNS-invariant $\Sigma^1(PB_n)$ was computed by Koban, McCammond and Meier in \cite{koban15}. We recall the computation here. If $n\ge 5$ then $[\chi]\in\Sigma^1(PB_n)^c$ if and only if $\chi=\chi'\circ \pi$ for $\pi$ one of the standard projections $PB_n \to PB_4$ or $PB_n \to PB_3$ given by deleting strands, and $[\chi']\in\Sigma^1(PB_3)^c$ or $\Sigma^1(PB_4)^c$. In particular, to understand $\Sigma^1(PB_n)^c$ we need only understand $\Sigma^1(PB_3)^c$ and $\Sigma^1(PB_4)^c$. For $PB_3$, we have $[\chi]\in\Sigma^1(PB_3)^c$ if and only if $\chi(S_{1,2})+\chi(S_{1,3})+\chi(S_{2,3})=0$. For $PB_4$ we have $[\chi]\in\Sigma^1(PB_4)^c$ if and only if either $\chi=\chi'\circ \pi$ for $[\chi]\in\Sigma^1(PB_3)^c$ and $\pi\colon PB_4 \to PB_3$ one of the standard projections, or else $\chi$ satisfies the equations $\chi(S_{1,2})=\chi(S_{3,4})$, $\chi(S_{1,3})=\chi(S_{2,4})$, $\chi(S_{1,4})=\chi(S_{2,3})$ and $\chi(S_{1,2})+\chi(S_{1,3})+\chi(S_{1,4})=0$. Note that these characterizations imply that, for any $\chi$, $[\chi]\in\Sigma^1(PB_n)$ if and only if $[-\chi]\in\Sigma^1(PB_n)$.

We now use the ideas from the previous sections to prove the following.

\begin{theorem}\label{thrm:braid}
 For $n\ge 4$ the braid group $B_n$ is not commensurable to any group that splits non-trivially over an NF subgroup.
\end{theorem}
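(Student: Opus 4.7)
The plan is to reduce to the pure braid group $PB_n$, which has finite index in $B_n$ (so any commensurability of $B_n$ with some $G'$ transfers to $PB_n$), and then run a braid analogue of the RAAG argument of Section~\ref{sec:raag_results}: Proposition~\ref{prop:negative_statement} will produce an obstructing NF subgroup of $PB_n$, while a direct dimension count in $\Hom(PB_n, \R)$ will contradict it. To apply Proposition~\ref{prop:negative_statement} with $G = PB_n$ and $L$ the NF subgroup of the splitting, I first check that $PB_n$ is not virtually of the form $L' \rtimes \Z$ for any finite-index $L' \le L$: since $L'$ would be NF, any such extension would be virtually solvable, contradicting the fact that $PB_n$ contains $F_2$ (already inside $PB_3 \cong F_2 \times \Z$) for $n \ge 3$. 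Supposing for contradiction that $B_n$ is commensurable to a group splitting non-trivially over an NF group $L$, the proposition then supplies $K \le PB_n$, isomorphic to a finite-index subgroup of $L$ (hence itself NF), such that $\chi(K) = 0$ forces $[\pm\chi] \in \Sigma^1(PB_n)^c$; since $\Sigma^1(PB_n)$ is closed under $\chi \mapsto -\chi$ (noted in the excerpt), this reads $\chi(K) = 0 \Rightarrow [\chi] \in \Sigma^1(PB_n)^c$.

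The key structural input is that NF subgroups of $B_n$ are virtually abelian. By Bigelow--Krammer, $B_n$ is linear, so the Tits alternative forces any NF subgroup to be virtually solvable, and by Birman--Lubotzky--McCarthy (applied to the braid group as the mapping class group of a punctured disk) solvable subgroups are virtually abelian. Hence $K$ is virtually abelian, and since $PB_n$ has cohomological dimension $n - 1$, the rank of $K$---and therefore of its image in the abelianization $\Z^{\binom{n}{2}}$---is at most $n-1$. The linear subspace $V \subseteq \Hom(PB_n, \R) = \R^{\binom{n}{2}}$ of characters vanishing on $K$ thus satisfies $\dim V \ge \binom{n}{2} - (n-1) = (n-1)(n-2)/2$.

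From the Koban--McCammond--Meier description recalled above, $\Sigma^1(PB_n)^c$ is a finite union of $2$-dimensional linear subspaces of $\R^{\binom{n}{2}}$: for each $3$-subset $\{i,j,k\}$ a $2$-plane cut out by the conditions ``$\chi$ vanishes off the triple'' together with $\chi(S_{i,j}) + \chi(S_{i,k}) + \chi(S_{j,k}) = 0$, and for each $4$-subset an additional $2$-plane coming from the four linear equations in the $PB_4$ formula. Since $(n-1)(n-2)/2 \ge 3$ precisely when $n \ge 4$, $V$ has dimension at least $3$; each intersection $V \cap W_i$ with a bad $2$-plane $W_i$ is therefore a proper subspace of $V$, and a finite union of proper subspaces of a real vector space cannot cover it. Picking $\chi \in V$ outside this union yields $\chi(K) = 0$ together with $[\chi] \in \Sigma^1(PB_n)$, the desired contradiction.

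The main obstacle---and what pins the bound at $n \ge 4$---is the dimensional inequality $\binom{n}{2} - (n-1) \ge 3$, which is tight exactly at $n = 4$: one needs the maximal abelian rank of $PB_n$ to be small enough against $\binom{n}{2}$ to force the vanishing subspace $V$ to escape a finite family of $2$-planes. The only other delicate point is upgrading ``NF'' to ``virtually abelian of rank $\le n-1$'', for which the Bigelow--Krammer linearity and Birman--Lubotzky--McCarthy inputs do all the work; without such an upgrade one would have no control on $\dim V$ and the counting argument would collapse.
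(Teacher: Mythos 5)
Your argument is correct, and while the first half (pass to $PB_n$, invoke Proposition~\ref{prop:negative_statement} to get an NF subgroup $K\le PB_n$ that no $\chi$ with $[\pm\chi]\in\Sigma^1(PB_n)$ can annihilate) coincides with the paper's, your contradiction is reached by a genuinely different route. The paper stays elementary: it takes $\chi$ with $\ker(\chi)=\sqrt{K[PB_n,PB_n]}$ from Proposition~\ref{prop:kill_subgroup}, reads off from the Koban--McCammond--Meier description two explicit elements of $K$ (powers of $S_{1,2},S_{1,3}$ times commutators for $n\ge5$, of $S_{1,2}S_{3,4}^{-1},S_{1,3}S_{2,4}^{-1}$ for $n=4$), and pushes them into $PB_3/Z(PB_3)\cong F_2$, where two elements either commute or generate $F_2$; abelianizing kills it. You instead upgrade ``NF'' to ``virtually abelian of rank at most $n-1$'' and make a covering argument: the annihilator $V$ of $K$ in $\Hom(PB_n,\R)\cong\R^{\binom{n}{2}}$ has dimension at least $\binom{n}{2}-(n-1)\ge 3$, while the cone over $\Sigma^1(PB_n)^c$ is a finite union of $2$-dimensional subspaces (one for each $3$-subset of strands, one for each $4$-subset), and a real vector space is never a finite union of proper subspaces. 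I checked both counts and they are right; the argument works and gives a transparent explanation of where $n\ge4$ enters. What your route buys is conceptual clarity and a template that would apply whenever the complement of $\Sigma^1$ is a union of low-dimensional subspaces; what it costs is heavy external input --- Bigelow--Krammer linearity, the Tits alternative, and Birman--Lubotzky--McCarthy --- where the paper needs only the two-element dichotomy in a free group.

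Two small repairs. Your verification of the hypothesis of Proposition~\ref{prop:negative_statement} misfires: $L'$ being NF does not make $L'\rtimes\Z$ virtually solvable ($L$ is an abstract group, not yet known to be linear). The correct one-line argument is that an extension of an NF group by $\Z$ is again NF, while $PB_n$ contains $F_2$ and so has no NF finite-index subgroup. Also, in passing from ``$K$ is virtually abelian of rank $\le n-1$'' to ``the image of $K$ in $\Z^{\binom{n}{2}}\otimes\R$ spans at most $n-1$ dimensions,'' you should note that every element of $K$ has a power in the finite-index abelian subgroup, so its image is a rational multiple of a vector in that subgroup's span --- exactly the computation already carried out in the proof of Proposition~\ref{prop:kill_subgroup}.
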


Note that $PB_3\cong F_2 \times \Z = \Z^2 *_\Z \Z^2$ and $\Z$ is NF, so the $n\ge 4$ restriction in the theorem is necessary. Also, the NF condition is obviously necessary, since for instance $B_n \cong [B_n,B_n] \rtimes \Z$ is a non-trivial HNN-extension.

\begin{proof}[Proof of Theorem~\ref{thrm:braid}]
 We will work with the pure braid group $PB_n$, which is commensurable to $B_n$ (being a finite index subgroup). Suppose $PB_n$ is commensurable to a group that splits non-trivially over an NF subgroup. Since $PB_n$ is not NF, Proposition~\ref{prop:negative_statement} implies that $PB_n$ admits an NF subgroup $K$ such that for any $\chi\in\Hom(PB_n,\R)$, if $\chi(K)=0$ then at least one of $[\pm\chi]$ lies in $\Sigma^1(PB_n)^c$, which means $[\chi]\in\Sigma^1(PB_n)^c$. By Proposition~\ref{prop:kill_subgroup}, there exists $\chi\in\Hom(PB_n,\R)$ with $\ker(\chi)=\sqrt{K[PB_n,PB_n]}$. Since $\chi(K)=0$ we know $[\chi]\in\Sigma^1(PB_n)^c$, which implies that either $n=4$ or else $\chi$ is induced from some standard projection onto $PB_3$ or $PB_4$.

 In particular if $n\ge 5$ then there exists $j$ such that $\chi(S_{i,j})=\chi(S_{j,k})=0$ for any $i<j$ or $j<k$ (just choose $j$ to be the label of a strand getting deleted), which implies that $S_{i,j},S_{j,k}\in \sqrt{K[PB_n,PB_n]}$ for any such $i$ or $k$. Up to automorphisms (note that the BNS-invariant is invariant under automorphisms) we can assume $j=1$, so in particular $S_{1,2},S_{1,3}\in \sqrt{K[PB_n,PB_n]}$. Choose $q,r\in\Z\setminus\{0\}$ and $c,d\in[PB_n,PB_n]$ such that $S_{1,2}^q c,S_{1,3}^r d \in K$, which since $K$ is NF implies that $S_{1,2}^q c$ and $S_{1,3}^r d$ do not generate a copy of $F_2$. Now consider the standard projection $\pi \colon PB_n \to PB_3$ given by deleting all but the first three strands. Then $S_{1,2}^q \pi(c)$ and $S_{1,3}^r \pi(d)$ do not generate a copy of $F_2$ in $PB_3$, and so neither do their images in $PB_3/Z(PB_3) \cong F_2$. Hence these images commute\footnote{Actually $S_{1,2}^q c$ and $S_{1,3}^r d$ already commute in $PB_n$ by \cite{leininger10}, but we have to pass to $F_2$ anyway, so it is not necessary to appeal to the result from \cite{leininger10}.}, and so modulo $Z(PB_3)$, $S_{1,2}^q \pi(c)$ and $S_{1,3}^r \pi(d)$ have a common power, say $(S_{1,2}^q \pi(c))^a=(S_{1,3}^r \pi(d))^b z$ for $a,b\in\Z$ and $z\in Z(PB_3)$. But modding out $Z(PB_3)$ and abelianizing $F_2$ to $\Z^2=\langle \overline{S}_{1,2},\overline{S}_{1,3}\rangle$, this implies that $qa\overline{S}_{1,2}=rb\overline{S}_{1,3}$, which is absurd.

 Now suppose $n=4$. If $\chi$ is induced from a standard projection $PB_4\to PB_3$ then we can use the above argument to get our contradiction, so suppose it is not. Hence we have $\chi(S_{1,2})=\chi(S_{3,4})$, $\chi(S_{1,3})=\chi(S_{2,4})$, $\chi(S_{1,4})=\chi(S_{2,3})$ and $\chi(S_{1,2})+\chi(S_{1,3})+\chi(S_{1,4})=0$. In particular $S_{1,2}S_{3,4}^{-1},S_{1,3}S_{2,4}^{-1}\in \ker(\chi)=\sqrt{K[PB_4,PB_4]}$ so we can choose $q,r\in\Z\setminus\{0\}$ and $c,d\in [PB_4,PB_4]$ such that $(S_{1,2}S_{3,4}^{-1})^q c$ and $(S_{1,3}S_{2,4}^{-1})^r d$ lie in $K$, hence do not generate a copy of $F_2$. Their images under the standard projection $\pi \colon PB_4 \to PB_3$ given by deleting all but the first three strands also do not generate a copy of $F_2$, so $S_{1,2}^q \pi(c)$ and $S_{1,3}^r \pi(d)$ do not generate a copy of $F_2$ in $PB_3$. We are now in the same situation as in the proof of the $n\ge 5$ case, and as in that proof we reach a contradiction.
\end{proof}

As a remark, it would not have worked to try and apply this technique to $B_n$ itself, so working with $PB_n$ really was necessary. Indeed, every $[\chi]\in\Sigma^1(B_n)$ satisfies $\ker(\chi)=[B_n,B_n]$, so it is impossible to find such a $\chi$ with an arbitrary NF subgroup lying in its kernel.

\subsection{Commensurability results for loop braid groups}\label{sec:comm_loop}

Much of this subsection proceeds very similarly to Subsection~\ref{sec:comm_braid}.

The \emph{loop braid group} $LB_n$ on $n$ loops is the group of symmetric automorphisms of the free group $F_n$. Fixing a free generating set $\{x_1,\dots,x_n\}$ for $F_n$, an automorphism $\alpha\in\Aut(F_n)$ is called \emph{symmetric} if it takes each $x_i$ to a conjugate of some $x_j$ or $x_j^{-1}$. Sometimes the word symmetric is reserved for those $\alpha$ taking each $x_i$ to a conjugate of some $x_j$, not allowing $x_j^{-1}$; this produces a finite index subgroup of what we are calling $LB_n$. This terminological ambiguity will not matter here, since we will actually work with the \emph{pure loop braid group} $PLB_n$, the group of automorphisms $\alpha\in\Aut(F_n)$ taking each $x_i$ to a conjugate of $x_i$, which is again a finite index subgroup of $LB_n$. The name loop braid group comes from viewing such automorphisms as pictures of $n$ loops in $3$-space moving around and through each other. See \cite{damiani16} for a great deal of background and more details.

The BNS-invariant $\Sigma^1(PLB_n)$ was computed by Orlandi-Korner \cite{orlandi-korner00}. We recall here some of her setup. First, $PLB_n$ is generated by $\{\alpha_{i,j}\mid i\ne j\}$, where $\alpha_{i,j}$ is the automorphism of $F_n$ taking $x_i$ to $x_j^{-1}x_i x_j$ and $x_k$ to itself for $k\ne i$. For $m<n$ a \emph{standard projection} $PLB_n \to PLB_m$ is a map induced from some projection $F_n \to F_m$ giving by sending some choice of $n-m$ generators to the identity and sending the remaining $m$ generators to the generators of $F_m$. Now $\Sigma^1(PLB_n)$ is described as follows. For $n\ge 4$, $[\chi]\in\Sigma^1(PLB_n)^c$ if and only if $\chi=\chi'\circ \pi$ for $\pi$ a standard projection $PLB_n \to PLB_2$ or $PLB_n \to PLB_3$ and $[\chi']$ in $\Sigma^1(PLB_2)^c$ or $\Sigma^1(PLB_3)^c$. For $n=3$ we have that $[\chi]\in\Sigma^1(PLB_3)^c$ if and only if it is induced from a standard projection to $PLB_2$ or else $\chi(\alpha_{2,1})+\chi(\alpha_{3,1})=0$, $\chi(\alpha_{1,2})+\chi(\alpha_{3,2})=0$ and $\chi(\alpha_{1,3})+\chi(\alpha_{2,3})=0$. For $n=2$ we have $\Sigma^1(PLB_2)=\emptyset$ (in fact $PLB_2\cong F_2$). Note that a consequence of all this is that $[\chi]\in\Sigma^1(PLB_n)$ if and only if $[-\chi]\in\Sigma^1(PLB_n)$.

We now use the ideas from the previous sections to prove the following. The proof is very similar to the proof of Theorem~\ref{thrm:braid}.

\begin{theorem}\label{thrm:loop}
 For $n\ge 3$ the loop braid group $LB_n$ is not commensurable to any group that splits non-trivially over an NF subgroup.
\end{theorem}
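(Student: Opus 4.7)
The plan is to mimic the proof of Theorem~\ref{thrm:braid} almost verbatim, replacing $PB_n$ and $S_{i,j}$ by $PLB_n$ and $\alpha_{i,j}$. Pass to the finite-index subgroup $PLB_n$ of $LB_n$ and suppose toward contradiction that it is commensurable to a group $G'$ splitting non-trivially over an NF subgroup $L$. Since $PLB_n$ contains the non-abelian free subgroup $\langle\alpha_{1,2},\alpha_{2,1}\rangle\cong F_2$, it cannot be virtually of the form $K\rtimes \Z$ for any finite index subgroup $K$ of $L$ (such a group cannot contain $F_2$: the kernel of its $\Z$-quotient would be virtually $F_2$ but, as a finite index subgroup of $K$, must be NF). Thus Proposition~\ref{prop:negative_statement} applies and yields an NF subgroup $K\le PLB_n$ such that every character killing $K$ lies in $\Sigma^1(PLB_n)^c$ (using that $\Sigma^1(PLB_n)$ is closed under $\chi\mapsto -\chi$). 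By Proposition~\ref{prop:kill_subgroup} I then fix $\chi\in\Hom(PLB_n,\R)$ with $\ker(\chi)=\sqrt{K[PLB_n,PLB_n]}$.

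Next I unpack Orlandi-Korner's description of $\Sigma^1(PLB_n)^c$ to produce a useful pair of elements in $\ker(\chi)$. For $n\ge 4$, the character $\chi$ must factor as $\chi'\circ\pi$ where $\pi\colon PLB_n\to PLB_m$ is a standard projection with $m\in\{2,3\}$; in particular some strand $k$ is deleted, so for any retained strand $i\ne k$ both $\alpha_{i,k}$ and $\alpha_{k,i}$ lie in $\ker(\pi)\subseteq\ker(\chi)$. For $n=3$ there are two subcases: either $\chi$ is induced from some standard projection $PLB_3\to PLB_2$, in which case the same picture applies with $k$ the deleted strand, or $\chi$ satisfies the three equations $\chi(\alpha_{1,2})+\chi(\alpha_{3,2})=0$, $\chi(\alpha_{2,1})+\chi(\alpha_{3,1})=0$, $\chi(\alpha_{1,3})+\chi(\alpha_{2,3})=0$. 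In the latter case the products $\alpha_{1,2}\alpha_{3,2}$ and $\alpha_{2,1}\alpha_{3,1}$ both lie in $\ker(\chi)$.

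In each situation I then produce a pair $g_1,g_2\in\sqrt{K[PLB_n,PLB_n]}$ together with a standard projection $\rho\colon PLB_n\to PLB_2\cong F_2$ sending $g_1,g_2$ to a free basis of $PLB_2$: in the projection cases take $g_1=\alpha_{i,k}$, $g_2=\alpha_{k,i}$ and let $\rho$ delete every strand except $i$ and $k$; in the equations case take $g_1=\alpha_{1,2}\alpha_{3,2}$, $g_2=\alpha_{2,1}\alpha_{3,1}$ and let $\rho$ delete strand $3$, which kills both $\alpha_{3,2}$ and $\alpha_{3,1}$ and sends $\alpha_{1,2},\alpha_{2,1}$ to the free generators of $PLB_2$. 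Choose $q,r\in\Z\setminus\{0\}$ and $c,d\in[PLB_n,PLB_n]$ with $g_1^qc,g_2^rd\in K$. Since $K$ is NF these two elements do not generate $F_2$, so neither do their images $\rho(g_1)^q\rho(c),\rho(g_2)^r\rho(d)$ in $F_2$. Both images are nontrivial (otherwise abelianizing $F_2$ and using $\rho(c),\rho(d)\in[F_2,F_2]$ would force $q$ or $r$ to vanish), so they share a common power in $F_2$; abelianizing $F_2$ to $\Z^2$ yields an equation $qa\,\overline{\rho(g_1)}=rb\,\overline{\rho(g_2)}$ with $qa,rb\ne 0$, contradicting the linear independence of the images of the free generators in $\Z^2$.

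The main obstacle is the $n=3$ equations-only case, since the ``symmetric'' candidates $\alpha_{1,2}\alpha_{3,2}$ and $\alpha_{1,3}\alpha_{2,3}$ both vanish under the projection deleting strand $3$ and give no useful information. The key observation is that the less symmetric pair $\alpha_{1,2}\alpha_{3,2},\alpha_{2,1}\alpha_{3,1}$ still lies in $\ker(\chi)$ under the Orlandi-Korner equations and survives that projection to yield the required free basis of $PLB_2$, bringing the argument back into the common framework.
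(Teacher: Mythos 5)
Your proposal is correct and follows essentially the same route as the paper's own proof: pass to $PLB_n$, combine Propositions~\ref{prop:negative_statement} and~\ref{prop:kill_subgroup} with Orlandi-Korner's description of $\Sigma^1(PLB_n)^c$, and in each case push a pair of elements of $K$ down to $PLB_2\cong F_2$ to contradict the NF hypothesis, using exactly the same pair $\alpha_{1,2}\alpha_{3,2},\alpha_{2,1}\alpha_{3,1}$ in the $n=3$ equations case. The only (harmless) additions are your explicit verification of the $K\rtimes\Z$ hypothesis and of the nontriviality of the projected elements.
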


The $n\ge3$ restriction is necessary since $PLB_2\cong F_2$ splits non-trivially over $\{1\}$, and the NF condition is necessary for reasons similar to the braid group case.

\begin{proof}[Proof of Theorem~\ref{thrm:loop}]
 We will work with the pure loop braid group $PLB_n$, which is commensurable to $LB_n$ (being a finite index subgroup). Suppose $PLB_n$ is commensurable to a group that splits non-trivially over an NF subgroup. Since $PLB_n$ is not NF, Proposition~\ref{prop:negative_statement} implies that $PLB_n$ admits an NF subgroup $K$ such that for any $\chi\in\Hom(PLB_n,\R)$, if $\chi(K)=0$ then at least one of $[\pm\chi]$ lies in $\Sigma^1(PLB_n)^c$, which means $[\chi]\in\Sigma^1(PLB_n)^c$. By Proposition~\ref{prop:kill_subgroup}, there exists $\chi\in\Hom(PLB_n,\R)$ with $\ker(\chi)=\sqrt{K[PLB_n,PLB_n]}$. Since $\chi(K)=0$ we know $[\chi]\in\Sigma^1(PLB_n)^c$, which implies that either $n=3$ or else $\chi$ is induced from some standard projection onto $PLB_2$ or $PLB_3$.

 In particular if $n\ge 4$ then there exists $i$ such that $\chi(\alpha_{i,j})=\chi(\alpha_{j,i})=0$ for all $i\ne j$ (just choose $i$ such that $x_i$ is sent to $1$ in the projection of $F_n$ inducing the standard projection of $PLB_n$), which implies that $\alpha_{i,j},\alpha_{j,i}\in \sqrt{K[PLB_n,PLB_n]}$ for all $i\ne j$. Up to automorphisms (note that the BNS-invariant is invariant under automorphisms) we can assume $i=1$, so in particular $\alpha_{1,2},\alpha_{2,1}\in \sqrt{K[PLB_n,PLB_n]}$. Choose $q,r\in\Z\setminus\{0\}$ and $c,d\in[PLB_n,PLB_n]$ such that $\alpha_{1,2}^q c,\alpha_{2,1}^r d \in K$, which since $K$ is NF implies that $\alpha_{1,2}^q c$ and $\alpha_{2,1}^r d$ do not generate a copy of $F_2$. Now consider the standard projection $\pi \colon PLB_n \to PLB_2$ given by sending all but the first two generators of $F_n$ to $1$ and the first two to the generators of $F_2$ (in order). Then $\alpha_{1,2}^q \pi(c)$ and $\alpha_{2,1}^r \pi(d)$ do not generate a copy of $F_2$ in $PLB_2$. Since $PLB_2\cong F_2$, this means $\alpha_{1,2}^q \pi(c)$ and $\alpha_{2,1}^r \pi(d)$ have a common power, say $(\alpha_{1,2}^q \pi(c))^a=(\alpha_{2,1}^r \pi(d))^b$ for $a,b\in\Z$. But abelianizing $F_2$ to $\Z^2=\langle \overline{\alpha}_{1,2},\overline{\alpha}_{2,1}\rangle$, this implies that $qa\overline{\alpha}_{1,2}=rb\overline{\alpha}_{2,1}$, which is absurd.

 Now suppose $n=3$. If $\chi$ is induced from a standard projection $PLB_3\to PLB_2$ then we can use the above argument to get our contradiction, so suppose it is not. Hence we have $\chi(\alpha_{2,1})+\chi(\alpha_{3,1})=0$, $\chi(\alpha_{1,2})+\chi(\alpha_{3,2})=0$ and $\chi(\alpha_{1,3})+\chi(\alpha_{2,3})=0$. In particular $\alpha_{1,2}\alpha_{3,2},\alpha_{2,1}\alpha_{3,1}\in \ker(\chi)=\sqrt{K[PLB_3,PLB_3]}$ so we can choose $q,r\in\Z\setminus\{0\}$ and $c,d\in [PLB_3,PLB_3]$ such that $(\alpha_{1,2}\alpha_{3,2})^q c$ and $(\alpha_{2,1}\alpha_{3,1})^r d$ lie in $K$, hence do not generate a copy of $F_2$. Their images under the standard projection $\pi \colon PLB_3 \to PLB_2$ induced by the projection $F_3\to F_2$ sending $x_1$ to $x_1$, $x_2$ to $x_2$ and $x_3$ to $1$ also do not generate a copy of $F_2$, so $\alpha_{1,2}^q \pi(c)$ and $\alpha_{2,1}^r \pi(d)$ do not generate a copy of $F_2$ in $PLB_2$. We are now in the same situation as in the proof of the $n\ge 4$ case, and as in that proof we reach a contradiction.
\end{proof}

Much like in the braid group case, it would not have worked to try and apply this technique to $LB_n$ itself, so working with $PLB_n$ really was necessary. In fact $LB_n$ has finite abelianization, so it is impossible to find non-trivial characters killing arbitrary NF subgroups simply because there no non-trivial characters at all.

\bibliographystyle{amsalpha}
\providecommand{\bysame}{\leavevmode\hbox to3em{\hrulefill}\thinspace}
\providecommand{\MR}{\relax\ifhmode\unskip\space\fi MR }
\providecommand{\MRhref}[2]{%
  \href{http://www.ams.org/mathscinet-getitem?mr=#1}{#2}
}
\providecommand{\href}[2]{#2}

\end{document}